\numberwithin{equation}{section} 
\DeclareFontFamily{OT1}{rsfs}{}
\DeclareFontShape{OT1}{rsfs}{m}{n}{ <-7> rsfs5 <7-10> rsfs7 <10-> rsfs10}{}
\DeclareMathAlphabet{\mycal}{OT1}{rsfs}{m}{n}
\theoremstyle{plain}
\newtheorem{thm}{Theorem}
  \newtheorem{defn}[thm]{Definition}
  \newtheorem{lem}[thm]{Lemma}
  \newtheorem{prop}[thm]{Proposition}
\theoremstyle{remark}
  \newtheorem{rem}[thm]{Remark}
\theoremstyle{plain}
\newtheorem{assume}{Assumption}
\newcommand{\uc}[1]{\mathcal{U}\left(#1\right)}
\begin{document}

\advance\textwidth 3cm
\title[On uniqueness for anisotropic Maxwell's equations]{On uniqueness for time harmonic anisotropic Maxwell's equations with
piecewise regular coefficients}

\author{John M. Ball}
\author{Yves Capdeboscq}
\author{Basang Tsering Xiao}
\address{Mathematical Institute, 24-29 St Giles, OXFORD OX1 3LB, UK}

\begin{abstract}
We are interested in the uniqueness of solutions to Maxwell's
equations when the magnetic permeability $\mu$ and the permittivity
$\varepsilon$ are symmetric positive definite matrix-valued
functions in $\mathbb{R}^{3}$. We show that a unique continuation
result for globally $W^{1,\infty}$ coefficients in a smooth, bounded
domain, allows one to prove that the solution is unique in the case
of coefficients which are piecewise $W^{1,\infty}$ with respect to a suitable
countable collection of sub-domains with $C^{0}$ boundaries. Such suitable
collections include any bounded finite collection.
The proof relies on a general argument, not specific to Maxwell's equations. This
result is then extended to the case when within these sub-domains the permeability
and permittivity are only $L^\infty$ in sets of small measure.
\end{abstract}
\maketitle

\section{Introduction}

Suppose we are given a time-harmonic incident electric field
$\mycal{E}^i$ and magnetic field $\mycal{H}^i$,  special solutions
of the time-harmonic homogeneous linear Maxwell equations of the
form $\mycal{E}^i=\Re\left(\mathbf{E}^i e^{-i \omega t}\right)$ and
magnetic field $\mycal{H}^i= \Re\left( \mathbf{H}^i e^{-i\omega t}
\right)$, where $\mathbf{E}^{i} \in
H_{\textrm{loc}}^{1}\left(\mathbb{R}^{3}\right)^{3}$ and
$\mathbf{H}^{i}\in
H_{\textrm{loc}}^{1}\left(\mathbb{R}^{3}\right)^{3}$ are complex-valued solutions of
the homogeneous time-harmonic Maxwell equations
\begin{align*}
\nabla\wedge\mathbf{E}^{i}-i\,\omega\mu_{0}\mathbf{H}^{i}
                            & =0\textrm{ in }\mathbb{R}^{3},\\
\nabla\wedge\mathbf{H}^{i}+i\,\omega\varepsilon_{0}\mathbf{E}^{i}
                            & =0\textrm{ in }\mathbb{R}^{3},
\end{align*}
where $\mu_{0}$ and $\varepsilon_{0}$ are positive constants,
representing respectively the magnetic permeability and the
electric permittivity of vacuum, and $\omega\in\mathbb{R}\setminus\{0\}$.
The full time-harmonic electromagnetic field $(\mathbf{E},\mathbf{H})\in H_{\textrm{loc}}\left(\textrm{curl};\mathbb{R}^{3}\right)$,
where for any domain $W$ we define
$$
H_{\textrm{loc}}\left(\textrm{curl};W \right):=
\left\{
\mathbf{u}\in L_{\textrm{loc}}^{2}\left(W\right)^{3}
\textrm{ such that }
\nabla\wedge\mathbf{u}\in L_{\textrm{loc}}^{2}\left(W\right)^{3}
\right\},
$$
satisfies Maxwell's equations
\begin{align}
\nabla\wedge\mathbf{E}  -i\,\omega\mu_{0}\mu(x)\,\mathbf{H}
                        & = 0\textrm{ in }\mathbb{R}^{3},\label{eq:maxwell-system}\\
\nabla\wedge\mathbf{H}  +i\,\omega\varepsilon_{0}\varepsilon(x)\,\mathbf{E}
                        & = 0\textrm{ in }\mathbb{R}^{3},\nonumber
\end{align}
where $\varepsilon$ and $\mu$ are real matrix-valued functions in
$L^{\infty}\left(\mathbb{R}^{3}\right)^{3\times3}$.
Decomposing the full electromagnetic field into its incident part and its scattered
part,
\begin{equation}\label{eq:def-scat}
\mathbf{E}^{s}:=\mathbf{E}-\mathbf{E}^{i}, \textrm{ and }
\mathbf{H}^{s}:=\mathbf{H}-\mathbf{H}^{i},
\end{equation}
we assume that the scattered field satisfies the Silver-M\"{u}ller
radiation condition, uniformly in all directions, 
that is, if $x:=r\theta$ then
\begin{equation}\label{eq:radiation-condition}
\lim_{r\to\infty}\sup_{\theta\in S^{2}}
\left|
\mathbf{H}^{s}\left(r \theta\right)\wedge r\theta
-r\mathbf{\mathbf{E}}^{s}\left(r\theta\right)
\right|=0,
\end{equation}
where $S^2:=\{ x \in \mathbb{R}^3 \textrm{ such that } |x|=1\}$ denotes the unit sphere.

This paper is about the existence of a unique solution to
\eqref{eq:maxwell-system} satisfying \eqref{eq:def-scat} and
\eqref{eq:radiation-condition}, under the following additional
hypotheses on $\varepsilon$ and $\mu$. We assume that both
permittivity and permeability are real symmetric, uniformly positive
definite and bounded, that is, there
exist $0<\alpha\leq\beta<\infty$ such that for all $\xi\in\mathbb{R}^{3}$  and almost every $x\in\mathbb{R}^3$,
\begin{align}
\alpha\left|\xi\right|^{2}&\leq\varepsilon(x)\xi\cdot\xi\leq\beta \left|\xi\right|^{2},\label{eq:eps-coercive1}\\
\alpha\left|\xi\right|^{2}&\leq\mu(x)\xi\cdot\xi\leq\beta\left|\xi\right|^{2}.\label{eq:eps-coercive2}
\end{align}
We suppose that $\varepsilon$ and $\mu$ vary only in an open bounded domain $\Omega$, so that
\begin{equation}\label{eq:eps-local}
\varepsilon=\mu=\mathbf{I}_3\textrm{ in } {\Omega}^{c}=\mathbb{R}^{3}\setminus {\Omega},
\end{equation}
where $\mathbf{I}_3$ is the identity matrix in $\mathbb{R}^{3\times3}$.
We assume that $\Omega$ is of the form
\begin{equation}
\Omega=\textrm{int}\left({\underset{i\in
I}{\cup}}\bar{\Omega}_{i}\right),\label{eq:def-Omea}
\end{equation}
where the sub-domains $\Omega_{i}$, $i\in I\subset\mathbb{N}$ are disjoint and of class $C^{0}$, and int 
denotes the interior. The permittivity $\varepsilon$ and the permeability $\mu$ are assumed to be 
piecewise $W^{1,\infty}$ with respect to the sub-domains $\Omega_i$, so that for each $i\in I$, there exist
 $\varepsilon_{i},\mu_{i} \in W^{1,\infty}\left(\mathbb{R}^{3}\right)^{3\times3}$ satisfying 
\eqref{eq:eps-coercive1}-\eqref{eq:eps-coercive2} and 
\begin{equation}\label{eq:lip}
\| \varepsilon_{i} \|_{W^{1,\infty}(\mathbb{R}^3)^{3\times3}} + \| \mu_{i} \|_{W^{1,\infty}(\mathbb{R}^3)^{3\times3}} \leq M_i,
\end{equation}
where $M_i>0$ is a positive constant, such that
\begin{equation} \label{eq:localreg-eps}
\varepsilon(x)=\varepsilon_{i}(x) \textrm{ and } \mu(x)=\mu_{i}(x),
\quad \textrm{a.e. } x \in \Omega_i \,.
\end{equation}
Given a bounded set $A\subset\mathbb{R}^3$, we write $\uc{A}$ as the (unique) unbounded component of $\bar{A}^c$.
\begin{assume}\label{Assumption}
For any $J\subset  I$, and $\Omega_J=  \textrm{\rm int}\left({\underset{j\in J}{\cup}}\bar{\Omega}_{j}\right)$,
there exists $j_0\in J$ such that $\partial\uc{\Omega_J}\cap\partial\Omega_{j_0}$ admits an interior point
relative to $\partial\uc{\Omega_J}$. In other words, there exist $j_0\in J$ and $x_0\in \partial\uc{\Omega_J}\cap\partial\Omega_{j_0}$
such that $B(x_0,\delta)\cap \partial\uc{\Omega_J} \subset \partial\Omega_{j_0}$ for some $\delta>0$.
\end{assume}

\begin{prop}
Assumption~\ref{Assumption} holds if for all $J\subset I$, 
there exist $x_J\in\partial\uc{\Omega_J}$ and $\delta_J>0$ such that $ B(x_J,\delta_J) \cap \Omega_j\neq\emptyset$ 
for only finitely many $j\in J$. In particular, Assumption~\ref{Assumption} holds when $I$ is finite.
\end{prop}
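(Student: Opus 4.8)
The plan is to prove the displayed sufficient condition; the ``in particular'' clause is then immediate, since when $I$ is finite every $J\subseteq I$ is finite, so ``$B(x_J,\delta_J)\cap\Omega_j\neq\emptyset$ for only finitely many $j\in J$'' holds automatically for any $x_J\in\partial\uc{\Omega_J}$ and any $\delta_J>0$; such an $x_J$ exists because $\Omega_J$ is a non-empty bounded open set, hence $\uc{\Omega_J}$ is a proper non-empty open subset of $\mathbb{R}^3$ and so has non-empty boundary. Two elementary topological facts will be used repeatedly. First, a connected component $V$ of an open set $G\subseteq\mathbb{R}^3$ is itself open and relatively closed in $G$, so $\partial V=\bar V\setminus V=\bar V\cap G^c\subseteq G^c$; applied with $G=\bar\Omega_J^c$ and $V=\uc{\Omega_J}$, together with $\Omega_J\cap\uc{\Omega_J}=\emptyset$, this gives $\partial\uc{\Omega_J}\subseteq\partial\Omega_J$. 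Second, if $B$ is open then $B\cap\bar A\subseteq\overline{B\cap A}$ for every set $A$; in particular $\Omega_j\cap B=\emptyset$ forces $\bar\Omega_j\cap B=\emptyset$.

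Now fix a non-empty $J\subseteq I$ and let $x_J\in\partial\uc{\Omega_J}$, $\delta_J>0$ be given by the hypothesis; set $B:=B(x_J,\delta_J)$ and $F:=\{\,j\in J:\Omega_j\cap B\neq\emptyset\,\}$, which is finite. First I would verify $F\neq\emptyset$: if $F=\emptyset$ then $\bar\Omega_j\cap B=\emptyset$ for every $j\in J$ by the second fact, hence $B\subseteq(\bigcup_{j\in J}\bar\Omega_j)^c\subseteq\Omega_J^c$, and then $B\subseteq\bar\Omega_J^c$ (a point of $B$ cannot lie in $\overline{\Omega_J}$, as $B$ is an open neighbourhood of it disjoint from $\Omega_J$); the connected set $B$ thus lies in a single component of $\bar\Omega_J^c$, and since $x_J\in\overline{\uc{\Omega_J}}\setminus\uc{\Omega_J}$ that component must be and cannot be $\uc{\Omega_J}$ — a contradiction. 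The crucial step is the local inclusion
\[
\partial\Omega_J\cap B\ \subseteq\ \bigcup_{j\in F}\partial\Omega_j .
\]
Inside $B$ the countable union $\bigcup_{j\in J}\bar\Omega_j$ reduces to the finite union $\bigcup_{j\in F}\bar\Omega_j$, the remaining terms meeting $B$ trivially. Given $x\in\partial\Omega_J\cap B$, from $\Omega_J\subseteq\bigcup_{j\in J}\bar\Omega_j$ we get $x\in\overline{\Omega_J}\subseteq\overline{\bigcup_{j\in J}\bar\Omega_j}$, whence by the second fact $x\in\overline{B\cap\bigcup_{j\in J}\bar\Omega_j}=\overline{\bigcup_{j\in F}(B\cap\bar\Omega_j)}\subseteq\bigcup_{j\in F}\bar\Omega_j$, a \emph{finite} union of closed sets; so $x\in\bar\Omega_{j_1}$ for some $j_1\in F$, and $x\notin\Omega_{j_1}$ (otherwise $x\in\mathrm{int}\,\bar\Omega_{j_1}\subseteq\Omega_J$, contradicting $x\in\partial\Omega_J$), i.e.\ $x\in\partial\Omega_{j_1}$.

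Finally, put $K:=\partial\uc{\Omega_J}\cap B$. It is non-empty (it contains $x_J$) and open in $\partial\uc{\Omega_J}$, and combining $\partial\uc{\Omega_J}\subseteq\partial\Omega_J$ with $\partial\Omega_J\cap B\subseteq\bigcup_{j\in F}\partial\Omega_j$ gives $K=\bigcup_{j\in F}(K\cap\partial\Omega_j)$, a finite union of subsets closed in $K$. A non-empty topological space is not a finite union of closed subsets all having empty interior (argue by induction on the number of sets, or invoke the Baire category theorem, legitimate since $K$ is open in the complete metric space $\partial\uc{\Omega_J}$); hence some $K\cap\partial\Omega_{j_0}$ with $j_0\in F\subseteq J$ contains a non-empty set $W$ open in $K$, and therefore open in $\partial\uc{\Omega_J}$ because $K$ is. Any $x_0\in W$ lies in $\partial\uc{\Omega_J}\cap\partial\Omega_{j_0}$, and there is $\delta>0$ with $B(x_0,\delta)\cap\partial\uc{\Omega_J}\subseteq W\subseteq\partial\Omega_{j_0}$, which is exactly the conclusion of Assumption~\ref{Assumption} for this $J$. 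The main obstacle, I expect, is precisely this local reduction: since $\uc{\Omega_J}$ is defined through the unbounded component of a complement — a genuinely global object — one must argue with some care that near $x_J$ its boundary is pinned inside finitely many of the fixed surfaces $\partial\Omega_j$; once that is in hand, the covering/Baire argument is routine.
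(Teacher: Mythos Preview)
Your proof is correct and follows essentially the same route as the paper's: localise to the ball $B(x_J,\delta_J)$, show that $\partial\uc{\Omega_J}$ is covered there by the finitely many surfaces $\partial\Omega_j$ with $j\in F$, and apply a Baire-type argument to extract one with non-empty relative interior. Your presentation is slightly more streamlined---you use the closure identity $B\cap\bar A\subseteq\overline{B\cap A}$ in place of the paper's sequence argument, and you observe that the \emph{finite} covering makes the full Baire theorem unnecessary---but the structure and the key ideas are identical.
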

\begin{proof}
Given $J$, $x_J\in\partial\uc{\Omega_J}$ and  $\delta_J$ as in the statement of the proposition 
let  $B_J = B(x_J,\delta_J)$ and let $J^\prime$ be the finite subset of $J$ such that
$ B_J \cap \underset{j\in J}{\cup}\Omega_j= B_J \cap \underset{j\in J^\prime}{\cup}\Omega_j$. We first show that
\begin{align}
\partial\uc{\Omega_J}\cap B_J = 
\underset{j\in J}{\cup}  \left(\partial\uc{\Omega_J}\cap  B_J\right) \cap \partial{\Omega}_j.\label{eq:hypobaire}
\end{align}
Indeed, let $x\in\partial\uc{\Omega_J}\cap B_J$. Then $x\not\in\underset{j\in J}{\cup} \Omega_j$. 
We claim that there exists a sequence $x_k\in {\underset{j\in J}{\cup}}{\Omega}_{j}$ such that $x_k$ tends to $x$.
If not, for some $\eta>0$ sufficiently small, we would have $B(x,\eta) \subset B_J$ , 
and $B(x,\eta)\cap {\underset{j\in J}{\cup}}{\Omega}_{j} = B(x,\eta)\cap {\underset{j\in J^\prime}{\cup}}{\Omega}_{j} 
=\emptyset$. 
On the other hand, there exists a sequence $y_k\in \uc{\Omega_J}$ such that $y_k$ tends to $x$.
But $B(x,\eta)$ is connected and contained in $\bar{\Omega}_{J}^c$,
thus $B(x,\eta)\subset \uc{\Omega_J}$. This contradiction proves the claim. 

Next, we note that $\partial\uc{\Omega_J}$ is closed, thus complete in the subspace topology induced by $\mathbb{R}^3$. 
Its intersection with the open ball $B_J$ is an open subspace of $\partial\uc{\Omega_J}$ by definition of the subspace 
topology. It is therefore a Baire space (see e.g. \cite{MUNKRES-75}). If a Baire Space is a countable union of closed sets, 
then one of the sets has an interior point. Using the identity \eqref{eq:hypobaire}, we obtain that
there exists $j_0$ such that  $\partial \uc{\Omega_J} \cap   B_J \cap \partial \Omega_{j_0}$ admits an interior point relative to
$\partial \uc{\Omega_J} \cap  B_J$, that is, there exist $j_0 \in J$, $x_0\in \partial \uc{\Omega_J} \cap  B_J$ and $\delta>0$ such that
$B(x_0,\delta )\cap \partial\uc{\Omega_J}\cap B_J \subset \partial\Omega_{j_0}$. 

Since $B_J$ is open, $B(x_0,\delta )\cap B_J = B(x_0,\delta )$ when $\delta$ is sufficiently small, and we have established that Assumption~\ref{Assumption} holds.
\end{proof}

\begin{figure}\label{fig:examples}
\includegraphics[width=0.3\columnwidth]{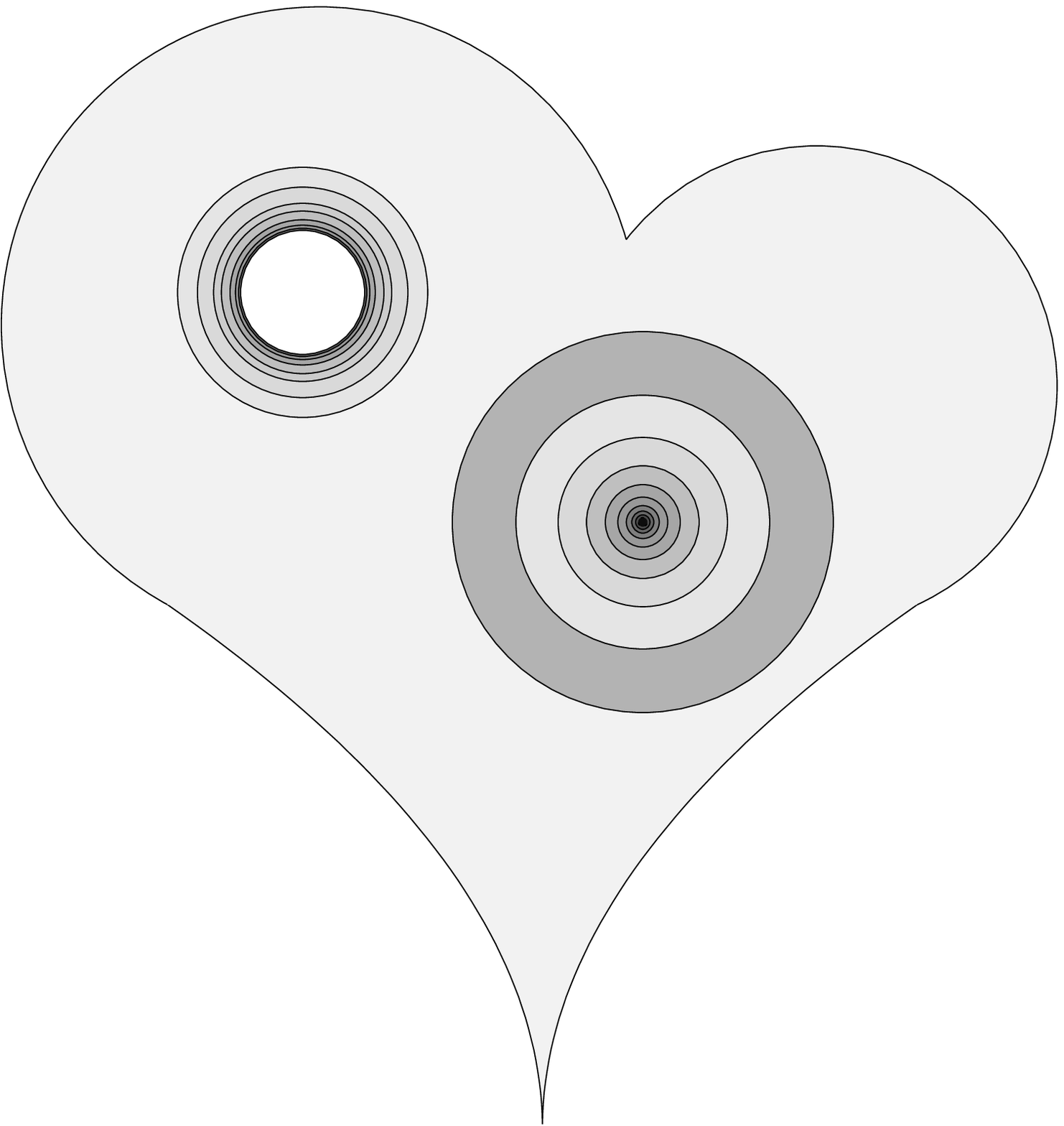}\, \includegraphics[width=0.3\columnwidth]{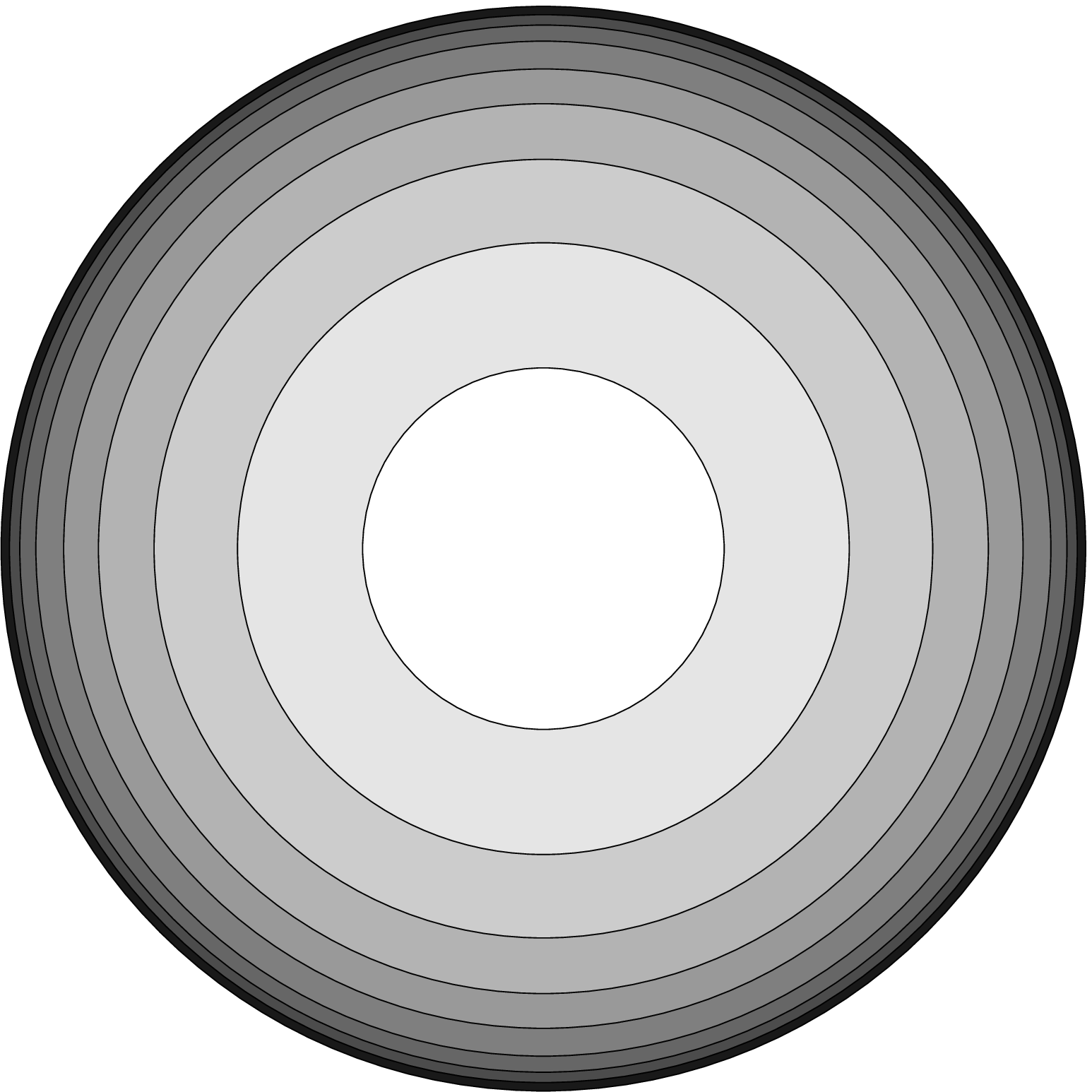}\, \includegraphics[width=0.3\columnwidth]{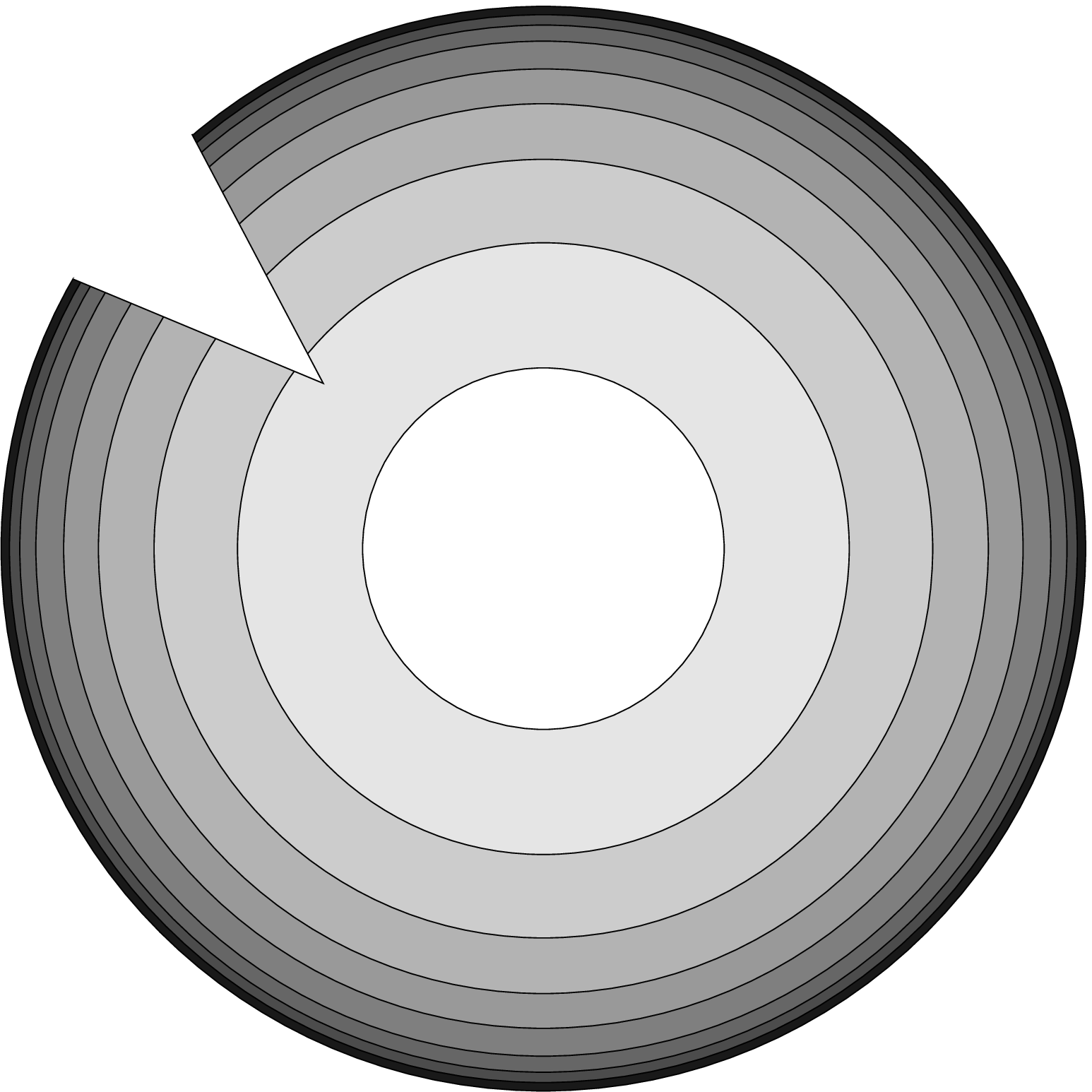}
\caption{
Left: an infinite collection of sub-domains satisfying Assumption~\ref{Assumption}. Centre: an infinite collection of sub-domains excluded by
Assumption~\ref{Assumption}. Right: this collection satisfies Assumption~\ref{Assumption}.}
\end{figure}

An example of a collection of sub-domains excluded by Assumption~\ref{Assumption} is a collection of  concentric shells concentrating on an exterior boundary, such as
\begin{equation}\label{eq:contrex}
\Omega_{i} = B\left(\mathbf{0},\frac{i}{i+1}\right)\setminus \bar{B}\left(\mathbf{0},\frac{i-1}{i}\right), \quad i=1,2,3,\ldots
\end{equation}
In such a case, $\partial \uc{\Omega}$ is the unit sphere, which is not the boundary of any of the subsets. On the other hand, Assumption~\ref{Assumption} allows
the sub-domains $\Omega_i$  to concentrate at a point or near an interior boundary. In Figure~\ref{fig:examples}, we represent on the left
a non-Lipschitz non-simply connected domain $\Omega$  which satisfies Assumption~\ref{Assumption}. In the centre, the domain given by \eqref{eq:contrex}
excluded by Assumption~\ref{Assumption} is shown. On the right, we sketch a domain inspired by the one described by \eqref{eq:contrex} which satisfies
Assumption~\ref{Assumption}: near the accumulating boundary, interior points can be found on the wedge-shaped slit in the domain.
\section{Main result}
Our main result is the following theorem.
\begin{thm} \label{thm:no-defect}
Assume that \eqref{eq:eps-coercive1}--\eqref{eq:localreg-eps}  and Assumption~\ref{Assumption} hold.
If for a given $\omega\neq0$,
$\mathbf{E}\in H_{\rm{loc}}\left(\rm{curl};\mathbb{R}^{3}\right)$
and
$\mathbf{H}\in H_{\rm{loc}}\left(\rm{curl};\mathbb{R}^{3}\right)$
are solutions of
\eqref{eq:maxwell-system}--\eqref{eq:radiation-condition}
corresponding to $\mathbf{E}^{i}=0$ and $\mathbf{H}^{i}=0$,
then
$\mathbf{E}=\mathbf{H}=0$.
\end{thm}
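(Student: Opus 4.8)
The plan is to reduce the piecewise-$W^{1,\infty}$ case to the known unique continuation result for globally $W^{1,\infty}$ coefficients by peeling off the sub-domains $\Omega_i$ one at a time, using Assumption~\ref{Assumption} and a Baire-type argument on the index set. First I would invoke the standard scattering theory: since $\mathbf E^i=\mathbf H^i=0$, the field $(\mathbf E,\mathbf H)$ is radiating and, by Rellich's lemma together with the unique continuation principle, $\mathbf E=\mathbf H=0$ in $\uc{\Omega}$, hence in a neighbourhood of $\partial\uc{\Omega}$ outside $\bar\Omega$. The real work is to propagate this vanishing \emph{into} $\Omega$ across the interfaces.

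\smallskip

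The key step is an induction/exhaustion over a well-ordering of $I$. Set $J_0=I$. By Assumption~\ref{Assumption} there is $j_0\in J_0$ and a point $x_0\in\partial\uc{\Omega_{J_0}}\cap\partial\Omega_{j_0}$ with $B(x_0,\delta)\cap\partial\uc{\Omega_{J_0}}\subset\partial\Omega_{j_0}$. Near $x_0$, the coefficients $\varepsilon,\mu$ agree a.e.\ with the globally $W^{1,\infty}$ matrices $\varepsilon_{j_0},\mu_{j_0}$ on the $\Omega_{j_0}$-side and equal $\mathbf I_3$ on the $\uc{\Omega_{J_0}}$-side — but $\mathbf I_3$ is itself globally $W^{1,\infty}$, and we already know $\mathbf E=\mathbf H=0$ on the $\uc{\Omega_{J_0}}$-side. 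The plan is to glue: extend the solution by zero across the locally-smooth patch of $\partial\Omega_{j_0}$ into a small ball $B=B(x_0,\delta')$, check that the glued pair $(\tilde{\mathbf E},\tilde{\mathbf H})$ still lies in $H(\mathrm{curl};B)$ and solves Maxwell's equations in $B$ with coefficients that are globally $W^{1,\infty}$ there (one matches $\varepsilon_{j_0},\mu_{j_0}$ against $\mathbf I_3$, which requires the $C^0$-graph structure of $\partial\Omega_{j_0}$ to ensure the two pieces meet along a hypersurface across which the tangential traces of $\mathbf E,\mathbf H$ are continuous — this is exactly where the transmission conditions intrinsic to $H(\mathrm{curl})$ are used). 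Then the global unique continuation hypothesis applied in $B$ forces $\tilde{\mathbf E}=\tilde{\mathbf H}=0$ in $B$, and since $B\cap\Omega_{j_0}\neq\emptyset$ and $\Omega_{j_0}$ is connected with globally-$W^{1,\infty}$ coefficients, unique continuation again propagates the zero throughout $\Omega_{j_0}$, and then to a neighbourhood of all of $\bar\Omega_{j_0}$.

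\smallskip

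Having killed the field on $\bar\Omega_{j_0}$, I set $J_1=J_0\setminus\{j_0\}$ and observe that $\Omega_{J_1}=\mathrm{int}(\cup_{j\in J_1}\bar\Omega_j)$ satisfies $\mathbf E=\mathbf H=0$ on $\uc{\Omega_{J_1}}$: the newly-exposed boundary of $\uc{\Omega_{J_1}}$ is covered by (parts of) $\partial\Omega_{j_0}$ on which we have just shown the field vanishes, together with the old $\partial\uc{\Omega_{J_0}}$. Assumption~\ref{Assumption} applies again to $J_1$, yielding $j_1$, and so on. To handle the fact that $I$ may be infinite, I would organize this as a transfinite induction over a well-ordering of $I$ (or, more carefully, show that the set $I'\subset I$ of indices on which $\mathbf E=\mathbf H=0$ is all of $I$: it is nonempty by the above; if $I'\neq I$ apply Assumption~\ref{Assumption} to $J=I\setminus I'$ to produce a new index on which the field vanishes, contradiction — the Baire/$C^0$ machinery of the Proposition guarantees the interior-point condition survives at each stage). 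Once $\mathbf E=\mathbf H=0$ on every $\bar\Omega_i$, they vanish on $\Omega=\mathrm{int}(\cup\bar\Omega_i)$ and on $\Omega^c$, hence on all of $\mathbb R^3$.

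\smallskip

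The main obstacle I anticipate is the gluing lemma at the interface: verifying that extending by zero across a $C^0$ (merely continuous, not Lipschitz) portion of $\partial\Omega_{j_0}$ produces a genuine $H(\mathrm{curl})$-solution with $W^{1,\infty}$ coefficients in a full ball, so that the hypothesized unique continuation theorem is applicable. This needs care because $C^0$ boundaries can be badly behaved, so one likely must work in a small ball where $\partial\Omega_{j_0}$ is a graph, exploit that the zero field trivially has the right tangential trace, and check the divergence-type conditions ($\nabla\cdot(\varepsilon\mathbf E)=0$ etc., in the sense needed). A secondary subtlety is ensuring that at each step of the (possibly transfinite) induction the complement index set $J$ still satisfies Assumption~\ref{Assumption} — but this is precisely the content of the Proposition together with the hypothesis, which is assumed for \emph{all} $J\subset I$, so it comes for free.
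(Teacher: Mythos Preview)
Your overall strategy --- define the ``bad'' index set $J=\{i\in I:\mathbf E,\mathbf H\text{ do not vanish a.e.\ in }\Omega_i\}$ (your $I\setminus I'$), assume $J\neq\emptyset$, invoke Assumption~\ref{Assumption} to locate $j_0$ and $x_0$, and derive a contradiction via unique continuation --- is exactly the paper's. Two points, however, need correction.

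First, your anticipated ``main obstacle'', the gluing lemma across a $C^0$ interface, is miscast and in fact unnecessary. You propose to match $\varepsilon_{j_0},\mu_{j_0}$ against $\mathbf I_3$ to obtain $W^{1,\infty}$ coefficients in a ball; across a merely continuous graph this generally fails. The paper's device is much simpler: since for a.e.\ $x$ in the small ball either $\mathbf E(x)=\mathbf H(x)=0$ or $(\varepsilon(x),\mu(x))=(\varepsilon_{j_0}(x),\mu_{j_0}(x))$, one has $\varepsilon\mathbf E=\varepsilon_{j_0}\mathbf E$ and $\mu\mathbf H=\mu_{j_0}\mathbf H$ a.e., so the weak formulation of \eqref{eq:maxwell-system} in that ball \emph{is} the weak formulation with the globally $W^{1,\infty}$ pair $(\varepsilon_{j_0},\mu_{j_0})$. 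No extension-by-zero, no transmission conditions, no trace theory on $C^0$ boundaries --- and the same trick disposes of your worry about later steps where the exterior side carries coefficients other than $\mathbf I_3$: only vanishing of the field there is used. Second, there is a genuine geometric step you have skipped. From $B(x_0,\delta)\cap\partial\uc{\Omega_J}\subset\partial\Omega_{j_0}$ it does \emph{not} follow that $B(x_0,\delta)\cap\Omega_J\subset\Omega_{j_0}$: below the graph of $\partial\Omega_{j_0}$ the ball may still meet other $\Omega_j$, $j\in J$, where neither the coefficients equal $\varepsilon_{j_0}$ nor the field is known to vanish, so the weak-formulation trick cannot be applied in $B(x_0,\delta)$. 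The paper uses the $C^0$-graph description of $\partial\Omega_{j_0}$ for exactly this: a line-segment argument in the $e_3$ direction locates a (possibly different) centre $c\in\partial\Omega_{j_0}\cap\partial\uc{\Omega_J}$ and radius $\tilde\delta$ with $B(c,\tilde\delta)\cap\Omega_J=B(c,\tilde\delta)\cap\Omega_{j_0}$. This is the place where the $C^0$ hypothesis is actually exploited, and it is missing from your outline.
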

There is a very long history concerning this problem, under various
assumptions on the coefficients, see e.g.
\cite{CAKONI-COLTON-03,COLTON-KRESS-98,ELLER-YAMAMOTO-06,HAZARD-LENOIR-96,LEIS-68,MULLER-69,NGUYEN-WANG-11,NEDELEC-01,OKAJI-02,VOGELSANG-91}
and the references therein. The improvement provided by the result in this work is
that we assume that $\varepsilon$, $\mu$ are matrix-valued functions and that
the sub-domains $\Omega_i$ are only of class $C^0$. We do not assume that the sub-domains are Lipschitz as assumed for example in 
\cite{HAZARD-LENOIR-96} for the isotropic (scalar) case.
The authors are not aware of the existence of a general uniqueness result for the above problem when the coefficients are just $C^{0,\alpha}$ H\"older continuous,
with $\alpha<1$. For general elliptic equations, counter-examples to unique continuation, the main technique for proving uniqueness, are known in that case, see \cite{MILLER-74}.
We remind the reader of the definition of a domain of
class $C^{0}$.
\begin{defn}\label{def:co-bdy}
A bounded domain $\Omega$ of $\mathbb{R}^3$ is of class $C^0$ if for any point $x_{0}$
on the boundary $\partial\Omega$, there exists a ball
$B\left(x_{0},\delta\right)$
and an orthogonal coordinate system $(x_1,x_2,x_3)$  with origin at $x_{0}$ such that there exists a continuous
function $f:\, C^{0}\left(\mathbb{R}^{2};\mathbb{R}\right)$ that satisfies
$$
\Omega\cap B\left(x_{0},\delta\right)
=
\left\{ x\in B\left(x_{0},\delta\right) : x_{3}> f\left(x_1, x_2\right)\right\}.
$$
\end{defn}

We define $B_0$ as the smallest open ball containing $\Omega$. Note that the uniqueness of the solution outside $B_0$ is well known,
due to the so-called Rellich's Lemma, see e.g. \cite{COLTON-KRESS-98}.
\begin{lem}[Rellich's Lemma]\label{lem:rellich}
If for a fixed $\omega$, $\mathbf{E}\in H_{{\rm{loc}}}\left({\rm{curl}};\mathbb{R}^{3}\right)$
and
$\mathbf{H}\in H_{{\rm{loc}}}\left({\rm{curl}};\mathbb{R}^{3}\right)$
are solutions of \eqref{eq:maxwell-system}-\eqref{eq:radiation-condition}
corresponding to
$\mathbf{E}^{i}=0$ and $\mathbf{H}^{i}=0$,
then
$\mathbf{E}=\mathbf{H}=0$ in $\bar{B}_0^{c}$.
\end{lem}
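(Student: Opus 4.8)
The plan is to pass, outside $B_0$, to the scalar Helmholtz equation and then combine the classical Rellich uniqueness lemma with the lossless energy identity forced by the Silver--M\"uller condition \eqref{eq:radiation-condition}. Throughout I use that, since $\mathbf{E}^i=\mathbf{H}^i=0$, \eqref{eq:def-scat} gives $\mathbf{E}^s=\mathbf{E}$ and $\mathbf{H}^s=\mathbf{H}$, so the radiation condition applies to $(\mathbf{E},\mathbf{H})$ itself.

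\textbf{Step 1 (reduction to the Helmholtz equation).} Since $\Omega\subset B_0$ and $\varepsilon=\mu=\mathbf{I}_3$ on $\Omega^c$, on the open connected set $\bar B_0^c$ --- indeed on $\bar\Omega^c$ --- the pair $(\mathbf{E},\mathbf{H})$ solves the constant-coefficient system $\nabla\wedge\mathbf{E}=i\omega\mu_0\mathbf{H}$, $\nabla\wedge\mathbf{H}=-i\omega\varepsilon_0\mathbf{E}$. By interior elliptic regularity for this system (equivalently, solutions of the Helmholtz equation are real-analytic) we have $\mathbf{E},\mathbf{H}\in C^\infty(\bar\Omega^c)$, so the surface integrals below make classical sense there. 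Taking the curl of the first equation and using $\nabla\cdot\mathbf{E}=0$ (obtained from the divergence of the second), every Cartesian component $u$ of $\mathbf{E}$ satisfies $\Delta u+k^2u=0$ in $\bar B_0^c$, with $k^2:=\omega^2\varepsilon_0\mu_0>0$. Rescaling \eqref{eq:radiation-condition}, $|\mathbf{H}(x)\wedge\hat x-\mathbf{E}(x)|=o(|x|^{-1})$ uniformly in $\hat x:=x/|x|$.

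\textbf{Step 2 (vanishing power flux; $L^2$-decay of $\mathbf{E}$ on spheres).} For $R$ large I would apply Green's formula for $H(\mathrm{curl})$ on the ball $B_R$ to $\mathbf{E}$ and $\bar{\mathbf{H}}$ --- this uses only the smoothness of $\partial B_R$, not any regularity of the interfaces of $\Omega$. Since $\nabla\wedge\mathbf{E}=i\omega\mu_0\mu\mathbf{H}$ and $\nabla\wedge\bar{\mathbf{H}}=i\omega\varepsilon_0\varepsilon\bar{\mathbf{E}}$ ($\varepsilon$ being real), and since $\varepsilon,\mu$ are real and symmetric, the scalars $(\mu\mathbf{H})\cdot\bar{\mathbf{H}}$ and $(\varepsilon\bar{\mathbf{E}})\cdot\mathbf{E}$ are real, so $\int_{B_R}\big((\nabla\wedge\mathbf{E})\cdot\bar{\mathbf{H}}-\mathbf{E}\cdot(\nabla\wedge\bar{\mathbf{H}})\big)\,dx$ is purely imaginary; by Green's formula it equals, up to sign, $\int_{\{|x|=R\}}\hat x\cdot(\mathbf{E}\wedge\bar{\mathbf{H}})\,dS$, whence $\gamma_R:=\Re\int_{\{|x|=R\}}\hat x\cdot(\mathbf{E}\wedge\bar{\mathbf{H}})\,dS=0$. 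Expanding $|\mathbf{H}\wedge\hat x-\mathbf{E}|^2$ and using the pointwise identity $\Re\big((\mathbf{H}\wedge\hat x)\cdot\bar{\mathbf{E}}\big)=\Re\big(\hat x\cdot(\mathbf{E}\wedge\bar{\mathbf{H}})\big)$ together with $\gamma_R=0$, one obtains $\int_{\{|x|=R\}}|\mathbf{E}|^2\,dS\le\int_{\{|x|=R\}}|\mathbf{H}\wedge\hat x-\mathbf{E}|^2\,dS=R^2\int_{S^2}|\mathbf{H}(R\theta)\wedge\theta-\mathbf{E}(R\theta)|^2\,d\theta$, and the last quantity tends to $0$ by the uniform $o(|x|^{-1})$ estimate of Step 1. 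Hence $\int_{\{|x|=R\}}|\mathbf{E}|^2\,dS\to0$ as $R\to\infty$.

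\textbf{Step 3 (Rellich's lemma for Helmholtz; conclusion).} I would then apply the classical scalar Rellich lemma --- a solution of $\Delta u+k^2u=0$ outside a ball with $\int_{\{|x|=R\}}|u|^2\,dS\to0$ vanishes identically outside that ball, see \cite{COLTON-KRESS-98} --- to each Cartesian component of $\mathbf{E}$. This gives $\mathbf{E}=0$ outside a large ball, hence, by real-analyticity in the connected set $\bar B_0^c$, $\mathbf{E}=0$ throughout $\bar B_0^c$; then $\mathbf{H}=(i\omega\mu_0)^{-1}\nabla\wedge\mathbf{E}=0$ in $\bar B_0^c$ as well, which is the assertion. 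The crux is the identity $\gamma_R=0$ in Step 2: it expresses that the medium is lossless and --- because $\mathbf{E}^i=\mathbf{H}^i=0$ --- carries no net real power flux through large spheres, and it is exactly where the realness and symmetry of $\varepsilon,\mu$ enter. Once it is in hand, \eqref{eq:radiation-condition} controls $\int_{\{|x|=R\}}|\mathbf{E}|^2\,dS$ directly, and Steps 1 and 3 are entirely standard.
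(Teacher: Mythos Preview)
Your proof is correct and follows the classical route (reduction to the Helmholtz equation outside $B_0$, the power-flux identity $\gamma_R=0$ from the real symmetry of $\varepsilon,\mu$, and the scalar Rellich lemma from \cite{COLTON-KRESS-98}). Note, however, that the paper does not actually prove this lemma: it simply states it as well known with a reference to \cite{COLTON-KRESS-98}, so there is no paper proof to compare against---what you have written is precisely the standard argument one finds in that reference.
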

Our proof relies on a recent unique continuation result \cite{NGUYEN-WANG-11}
proved for globally $W^{1,\infty}$ regular coefficients.

\begin{thm}[\cite{NGUYEN-WANG-11}]\label{thm:unicont}
Let $V$ be a connected open set in $\mathbb{R}^3$.  Assume that $\varepsilon$ and $\mu$ are two real symmetric matrix valued functions in $V$ satisfying 
\eqref{eq:eps-coercive1}--\eqref{eq:eps-coercive2}, 
and 
$$
\| \varepsilon  \|_{W^{1,\infty}(V)^{3\times3}} + \| \mu \|_{W^{1,\infty}(V)^{3\times3}} \leq M,
$$
for some constant $M>0$.  Suppose $(\mathbf{E},\mathbf{H})\in \left(L^2_{\rm{loc}}\left(V\right)\right)^2$ satisfy
\begin{align*}
\nabla\wedge\mathbf{E}  -i\,\omega\mu_{0}\mu(x)\,\mathbf{H}
                        & = 0\textrm{ in }V,\\
\nabla\wedge\mathbf{H}  +i\,\omega\varepsilon_{0}\varepsilon(x)\,\mathbf{E}
                        & = 0\textrm{ in }V.
\end{align*}
Then, there exist $s>0$ independent of $V$, $\mathbf{E}$ and $\mathbf{H}$, such that if for some $x_0\in V$, and for all $N\in\mathbb{N}$ and all $\delta>0$ sufficiently small,
$$
\int_{B(x_0,\delta)} \left(\left|\mathbf{E}\right|^2+\left|\mathbf{H}\right|^2\right) dx \leq C_N \exp\left(-N \delta^{-s}\right)
$$
for some constant $C_N>0$, then $\mathbf{E}=\mathbf{H}=0$ in $V$.
\end{thm}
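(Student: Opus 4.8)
This is a strong unique continuation statement, so I would prove it by the classical two-step strategy: first reduce the first-order Maxwell system to a second-order \emph{elliptic} system whose principal part has Lipschitz coefficients, and then establish a Carleman estimate whose weight is calibrated precisely to the vanishing rate $\exp(-N\delta^{-s})$ appearing in the hypothesis.

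For the reduction, I would eliminate the magnetic field using $\mathbf{H}=(i\omega\mu_0)^{-1}\mu^{-1}\nabla\wedge\mathbf{E}$, which is legitimate since $\mu$ is uniformly positive definite and $W^{1,\infty}$, and substitute into the second equation to obtain $\nabla\wedge(\mu^{-1}\nabla\wedge\mathbf{E})=\omega^2\mu_0\varepsilon_0\,\varepsilon\mathbf{E}$. Taking divergences of the two Maxwell equations yields the constraints $\nabla\cdot(\varepsilon\mathbf{E})=0$ and $\nabla\cdot(\mu\mathbf{H})=0$. The curl-curl operator is not elliptic on its own, so I would add the term $-\nabla\bigl(a\,\nabla\cdot(\varepsilon\mathbf{E})\bigr)$, which vanishes on solutions, choosing the scalar Lipschitz factor $a$ so that the principal symbol of the resulting operator $L$ becomes uniformly invertible on $\mathbb{R}^{3}$. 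The outcome is a genuinely elliptic second-order system $L\mathbf{E}=0$ whose principal part is of divergence form with a Lipschitz, uniformly elliptic coefficient matrix built from $\mu$ and $\varepsilon$; all constants depend only on $\alpha,\beta,\omega$ and the bound $M$, which is what makes $s$ independent of $V$, $\mathbf{E}$ and $\mathbf{H}$.

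From $L\mathbf{E}=0$ I would extract a pointwise differential inequality $|L_0\mathbf{E}|\le C(|\mathbf{E}|+|\nabla\mathbf{E}|)$, where $L_0$ is the principal part alone; the lower-order terms on the right absorb the commutators generated by differentiating the Lipschitz coefficients. A local Caccioppoli inequality for $L_0$ then upgrades the hypothesis from $L^2$ to $H^1$ vanishing, giving $\int_{B(x_0,\delta)}(|\mathbf{E}|^2+|\nabla\mathbf{E}|^2)\,dx\le C_N'\exp(-N\delta^{-s})$ for every $N$. Finally I would prove a Carleman estimate for $L_0$ with a radial weight $\phi(x)=|x-x_0|^{-s}$, of the schematic form $\tau^3\int e^{2\tau\phi}|\mathbf{E}|^2+\tau\int e^{2\tau\phi}|\nabla\mathbf{E}|^2\le C\int e^{2\tau\phi}|L_0\mathbf{E}|^2$ for all large $\tau$. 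Inserting the differential inequality, the right-hand side is controlled by the left after absorption away from $x_0$; the contribution from $B(x_0,\delta)$ is handled by the hypothesis, since $e^{2\tau\phi}\le e^{2\tau\delta^{-s}}$ there and choosing $N>2\tau$ sends it to $0$ as $\delta\to0$. Letting $\tau\to\infty$ forces $\mathbf{E}\equiv0$ in a punctured ball around $x_0$, hence on $B(x_0,r)$; weak unique continuation for $L_0$ together with the connectedness of $V$ then propagate this to $\mathbf{E}\equiv0$ on $V$, and $\mathbf{H}=(i\omega\mu_0)^{-1}\mu^{-1}\nabla\wedge\mathbf{E}=0$ follows.

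The main obstacle is the Carleman estimate itself. The principal coefficients are only Lipschitz, which is exactly the borderline regularity at which strong unique continuation is delicate: below it, the H\"older counterexamples of Miller cited above show that it may fail. Two difficulties must be met at once. The leading coefficients are non-smooth, so the first-order commutator terms produced by the weight must be absorbed without losing the gain in powers of $\tau$; and because $\mu$ and $\varepsilon$ are matrix-valued, the principal part is a true anisotropic system rather than a diagonal one, so the estimate cannot be reduced to the scalar Laplacian component by component. Overcoming this requires either a pointwise diagonalization of the principal symbol by a Lipschitz change of frame or a direct system Carleman estimate exploiting the symmetry and coercivity of $\varepsilon$ and $\mu$; the calibration between the weight exponent and the Lipschitz modulus is what fixes the admissible value of $s$.
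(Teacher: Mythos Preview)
The paper does not prove this theorem: it is quoted verbatim from \cite{NGUYEN-WANG-11} and used as a black box, so there is no ``paper's own proof'' to compare against. Your sketch is a reasonable outline of the strategy that underlies that reference---reduction of the first-order Maxwell system to an elliptic second-order system with Lipschitz principal coefficients, followed by a Carleman estimate with a singular radial weight---and the identification of the Lipschitz regularity as the critical threshold is exactly the point. If your aim was to reproduce the cited result rather than the present paper, what you have written is the right scaffolding; the substantive work, as you note, is the Carleman estimate for a genuinely anisotropic system at borderline regularity, and that is where the actual proof in \cite{NGUYEN-WANG-11} lives.
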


The proof of Theorem~\ref{thm:no-defect} consists of three steps. The first two steps are given by the two propositions below.

\begin{prop}\label{prop:step1} Under the hypothesis of Theorem~\ref{thm:no-defect}, suppose that $A\subset\mathbb{R}^3$ is a bounded open set and 
that for almost every $x\in\uc{A}$ either  $\varepsilon(x) = \mu(x) = \mathbf{I}_3$ or $\mathbf{E}(x)=\mathbf{H}(x)=0$.
Then $\mathbf{E}=\mathbf{H}=0$ in $\uc{A}$.
\end{prop}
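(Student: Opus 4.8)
The plan is to propagate the vanishing of $(\mathbf{E},\mathbf{H})$ from the far field into $\uc{A}$ by unique continuation, using Theorem~\ref{thm:unicont} on the open sets where the coefficients are globally $W^{1,\infty}$. Write $U:=\uc{A}$; by construction $U$ is open, connected and unbounded, with $\partial U\subseteq\overline{A}$. Since $U$ is unbounded it meets $\overline{B_0}^{\,c}$ in a non-empty open set, on which $\mathbf{E}=\mathbf{H}=0$ by Rellich's Lemma (Lemma~\ref{lem:rellich}); hence the largest open subset $G\subseteq U$ on which $\mathbf{E}=\mathbf{H}=0$ almost everywhere is non-empty. Because $\mathbf{E},\mathbf{H}\in L^{2}_{\mathrm{loc}}$, it suffices to prove that $U\setminus G$ is Lebesgue--null.

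First I would show that the boundary of $G$ relative to $U$ lies in the singular stratum $S:=\overline{\Omega}\setminus\bigcup_{i\in I}\Omega_{i}$. Indeed, if $x_{0}\in U\setminus S$ then either $x_{0}\notin\overline{\Omega}$, so that $\varepsilon=\mu=\mathbf{I}_{3}$ a.e.\ on some ball $B(x_{0},r)\subseteq U$, or $x_{0}\in\Omega_{i}$ for some $i$, so that $\varepsilon=\varepsilon_{i}$ and $\mu=\mu_{i}$ a.e.\ on some ball $B(x_{0},r)\subseteq\Omega_{i}\cap U$; in either case the coefficients agree on $B(x_{0},r)$ with matrix fields that are globally $W^{1,\infty}$ and satisfy \eqref{eq:eps-coercive1}--\eqref{eq:eps-coercive2}. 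If in addition $x_{0}\in\overline{G}$, then $B(x_{0},r)\cap G$ is a non-empty open set on which the fields vanish, so the decay hypothesis of Theorem~\ref{thm:unicont} is met at any of its points, and Theorem~\ref{thm:unicont} yields $\mathbf{E}=\mathbf{H}=0$ on the connected set $B(x_{0},r)$; thus $x_{0}\in G$. Hence $\left(\overline{G}\cap U\right)\setminus G\subseteq S$. Since, by Definition~\ref{def:co-bdy}, $\partial\Omega$ and each $\partial\Omega_{i}$ is locally the graph of a continuous function, these sets — and hence $S$ — are Lebesgue--null, and $S$ is closed; a Baire category argument of the type used in the Proposition above, together with Assumption~\ref{Assumption} to control the accumulation of the $\partial\Omega_{i}$, shows moreover that $S$ is nowhere dense. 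Consequently, if $G$ is dense in $U$ then $U\setminus G=\left(\overline{G}\cap U\right)\setminus G\subseteq S$ is null and we are done.

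It remains to prove that $G$ is dense in $U$, and this I expect to be the crux. Suppose not, and let $C$ be a connected component of the non-empty open set $U\setminus\overline{G}$. From the hypothesis on $\uc{A}$, together with a quantitative strengthening of Theorem~\ref{thm:unicont} — finiteness of the vanishing order, hence a nodal set of measure zero, for non-trivial solutions of the Maxwell system with $W^{1,\infty}$ coefficients — one checks successively that $\varepsilon=\mu=\mathbf{I}_{3}$ a.e.\ on $C$ (otherwise the fields would vanish on a set of positive measure, hence on an open subset of $C$, contradicting $C\cap G=\emptyset$), that $C$ is bounded (otherwise it would meet $\overline{B_{0}}^{\,c}\subseteq G$), and that $\partial C\cap U\subseteq S$ (since $\partial C\cap U\subseteq\left(\overline{G}\cap U\right)\setminus G$). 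Thus $(\mathbf{E},\mathbf{H})$ solves the constant-coefficient Maxwell system on the bounded set $C$. To reach a contradiction I would argue that the fields vanish on the outer side of $\partial C$ — which is approached through the regular pieces of $U$ from the far field, possibly after an inductive peeling of the $\Omega_{i}$ governed by Assumption~\ref{Assumption} — and then use that $(\mathbf{E},\mathbf{H})\in H_{\mathrm{loc}}(\mathrm{curl};\mathbb{R}^{3})$, so that the tangential traces $\nu\wedge\mathbf{E}$ and $\nu\wedge\mathbf{H}$ are continuous across $\partial C$: this forces them to vanish on $\partial C$, so that extending $(\mathbf{E},\mathbf{H})|_{C}$ by zero produces a compactly supported solution of the free Maxwell equations; each Cartesian component then solves a Helmholtz equation and is compactly supported, hence vanishes, so $\mathbf{E}=\mathbf{H}=0$ on $C$, a contradiction. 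The delicate points — and the reason the argument is not purely local — are the justification of these trace identities on the merely $C^{0}$ interface $\partial C$ and the proof that the far-field vanishing really does surround every such component $C$; it is here that Assumption~\ref{Assumption} is essential.
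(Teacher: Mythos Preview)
You have missed the single, elementary observation that makes this proposition almost trivial and renders all of your machinery unnecessary. The hypothesis says that for almost every $x\in\uc{A}$ either $\varepsilon(x)=\mu(x)=\mathbf{I}_3$ or $\mathbf{E}(x)=\mathbf{H}(x)=0$; in \emph{both} alternatives one has the pointwise identities
\[
\mu(x)\mathbf{H}(x)=\mathbf{H}(x),\qquad \varepsilon(x)\mathbf{E}(x)=\mathbf{E}(x),
\]
since a zero vector is fixed by any matrix. Hence, in the weak formulation of \eqref{eq:maxwell-system} restricted to $\uc{A}$, the terms $\mu\mathbf{H}$ and $\varepsilon\mathbf{E}$ may be replaced by $\mathbf{H}$ and $\mathbf{E}$ everywhere, and $(\mathbf{E},\mathbf{H})$ is a weak solution of the \emph{constant-coefficient} Maxwell system on the whole of the open connected set $\uc{A}$. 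Rellich's Lemma gives $\mathbf{E}=\mathbf{H}=0$ on an open ball in $\uc{A}$, and a single application of Theorem~\ref{thm:unicont} with $\varepsilon=\mu=\mathbf{I}_3$ (equivalently, unique continuation for the Helmholtz equation) finishes the proof. This is precisely the paper's argument.

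Your proposal, by contrast, tries to propagate vanishing through the decomposition $\{\Omega_i\}$ and across $C^0$ interfaces, invoking Assumption~\ref{Assumption}, a Baire argument, an unstated quantitative vanishing-order result, and tangential-trace continuity on merely $C^0$ boundaries. None of this is needed here, and several of these steps are genuine gaps as stated: Assumption~\ref{Assumption} plays no role in this proposition (it is used only later, in Proposition~\ref{pro:step-2}); the ``nodal set of measure zero'' claim is not provided by Theorem~\ref{thm:unicont}; and the trace argument on a $C^0$ interface is, as you yourself note, not justified. The conceptual error is treating the dichotomy as partitioning $\uc{A}$ into two regions to be handled separately, rather than as a pointwise identity that collapses the variable coefficients to the identity throughout $\uc{A}$.
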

\begin{proof}
For any $\mathbf{v,w}\in L^2\left(\uc{A}\right)^2$,
we have
\begin{align}
\int_{\uc{A}} \nabla\wedge\mathbf{E}\cdot \mathbf{v}\, dx
-i\omega\mu_{0}\int_{\uc{A}}\mu(x)\mathbf{H}\cdot\mathbf{v}\, dx & =0,
\label{eq:max-step11h}\\
\int_{\uc{A}} \nabla\wedge\mathbf{H} \cdot \mathbf{w}\, dx
+i\omega\varepsilon_{0}\int_{\uc{A}}\epsilon(x)\mathbf{E}\cdot\mathbf{w}\, dx& =0,
\label{eq:max-step1ie}
\end{align}
where the integrals \eqref{eq:max-step11h} and \eqref{eq:max-step1ie} are well defined by Rellich's Lemma~\ref{lem:rellich}.
Since for almost every $x$ in $\uc{A}$, either  $\varepsilon(x)=\mu(x)=\mathbf{I}_3$ or $\mathbf{E}=\mathbf{H}=0$,
the solutions of the system \eqref{eq:max-step11h}-\eqref{eq:max-step1ie} can be written also in the form
\begin{align*}
\int_{\uc{A}} \nabla\wedge\mathbf{E} \cdot \mathbf{v}\, dx
-i \omega\mu_{0}\int_{\uc{A}} \mathbf{H}\cdot\mathbf{v}\, dx & =0,\\
\int_{\uc{A}} \nabla\wedge\mathbf{H}\cdot\mathbf{w}\, dx
+ i \omega\varepsilon_{0}\int_{\uc{A}}\mathbf{E}\cdot\mathbf{w}\, dx& =0,
\end{align*}
which is the weak formulation of
\begin{align*}
\nabla\wedge\mathbf{E}-i\,\omega\mu_{0}\,\mathbf{H} & =0
 \textrm{ in }\uc{A},\\
\nabla\wedge\mathbf{H}+i\,\omega\varepsilon_{0}\,\mathbf{E} & =0
 \textrm{ in }\uc{A}.
\end{align*}
Next, since $A$ is bounded,   
thanks to Rellich's Lemma~\ref{lem:rellich}, $\mathbf{E}=\mathbf{H}=0$
in $\uc{A}\cap \left(\mathbb{R}^3\setminus \bar{B}(R)\right)$, for $R$ large enough.
In particular, $\mathbf{E}$ and $\mathbf{H}$ vanish in a ball contained in $\uc{A}$, which is open and connected, and the conclusion 
follows from Theorem~\ref{thm:unicont}, applied with $\varepsilon(x)=\mu(x)=\mathbf{I}_3$, which in this case reduces to a well known result 
concerning the Helmholtz equation.
\end{proof}
\begin{prop}\label{pro:step-2}
Let
$$
J:=\{i\in I: \left| \mathbf{E}(x) \right|^2 + \left| \mathbf{H}(x) \right|^2 >0 \textrm{ on a set of positive measure in } \Omega_i\}.
$$
Then $J=\emptyset$.
\end{prop}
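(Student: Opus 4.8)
The plan is to argue by contradiction, combining Proposition~\ref{prop:step1}, Assumption~\ref{Assumption}, and two applications of the unique continuation Theorem~\ref{thm:unicont}. Suppose $J\neq\emptyset$ and set $\Omega_J=\textrm{int}\big(\bigcup_{j\in J}\bar\Omega_j\big)$, a nonempty bounded open set. The first task is to check that Proposition~\ref{prop:step1} applies with $A=\Omega_J$. Since $\Omega_j\subset\Omega_J\subset\overline{\Omega_J}$ for every $j\in J$ while $\uc{\Omega_J}$ is disjoint from $\overline{\Omega_J}$, any point of $\uc{\Omega_J}$ lying in some $\Omega_i$ must have $i\notin J$, so $\mathbf{E}=\mathbf{H}=0$ a.e.\ there by the definition of $J$; and every other point of $\uc{\Omega_J}$ either lies outside $\Omega$, where $\varepsilon=\mu=\mathbf{I}_3$, or lies in $\bigcup_i\partial\Omega_i$, which is Lebesgue-null because each $\partial\Omega_i$ is locally the graph of a continuous function by Definition~\ref{def:co-bdy} and $I$ is countable. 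Hence for a.e.\ $x\in\uc{\Omega_J}$ either $\varepsilon(x)=\mu(x)=\mathbf{I}_3$ or $\mathbf{E}(x)=\mathbf{H}(x)=0$, and Proposition~\ref{prop:step1} gives $\mathbf{E}=\mathbf{H}=0$ in $\uc{\Omega_J}$.

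Next I would invoke Assumption~\ref{Assumption} for this $J$ to obtain $j_0\in J$ and $x_0\in\partial\uc{\Omega_J}\cap\partial\Omega_{j_0}$ with $B(x_0,\delta)\cap\partial\uc{\Omega_J}\subset\partial\Omega_{j_0}$. Working in coordinates adapted to $\partial\Omega_{j_0}$ at $x_0$ as in Definition~\ref{def:co-bdy}, so that $f(0,0)=0$ and $\Omega_{j_0}$ is locally $\{x_3>f(x_1,x_2)\}$, the geometric heart of the argument is to pick, using the continuity of $f$, a sufficiently small open cylinder $Q=\{|(x_1,x_2)|<r\}\times\{|x_3|<\rho\}$ centred at $x_0$, contained in $B(x_0,\delta)$ and in the chart neighbourhood, with $\rho>\sup_{|(x_1,x_2)|\le r}|f|$. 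Then $Q^+:=Q\cap\{x_3>f\}=Q\cap\Omega_{j_0}$ and $Q^-:=Q\cap\{x_3<f\}$ are both nonempty and connected, the fibre over each base point being a nonempty interval, while $Q\cap\{x_3=f\}$ is Lebesgue-null. Because $\uc{\Omega_J}$ is open and disjoint from $\Omega_{j_0}$, one has $\uc{\Omega_J}\cap Q\subset Q^-$; this set is relatively open in $Q^-$, nonempty since $x_0\in\partial\uc{\Omega_J}$, and relatively closed in $Q^-$ because a limit point of $\uc{\Omega_J}$ lying in $Q^-$ cannot lie on $\partial\uc{\Omega_J}$, which inside $B(x_0,\delta)$ is contained in the graph $\{x_3=f\}$. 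As $Q^-$ is connected this forces $\uc{\Omega_J}\cap Q=Q^-$, so $\mathbf{E}=\mathbf{H}=0$ in $Q^-$.

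With this in hand I would observe that $(\mathbf{E},\mathbf{H})$ solves Maxwell's equations in the connected open set $Q$ with the globally $W^{1,\infty}$ coefficients $\varepsilon_{j_0},\mu_{j_0}$: indeed $\varepsilon\mathbf{E}=\varepsilon_{j_0}\mathbf{E}$ and $\mu\mathbf{H}=\mu_{j_0}\mathbf{H}$ a.e.\ in $Q$, since this holds on $Q^+\subset\Omega_{j_0}$ by \eqref{eq:localreg-eps}, on $Q^-$ trivially because $\mathbf{E}=\mathbf{H}=0$ there, and $Q\cap\{x_3=f\}$ is null. As $\mathbf{E},\mathbf{H}$ vanish on the nonempty open set $Q^-$, Theorem~\ref{thm:unicont} applied on $Q$ yields $\mathbf{E}=\mathbf{H}=0$ in $Q$, hence on the nonempty open subset $Q\cap\Omega_{j_0}$ of $\Omega_{j_0}$. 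Applying Theorem~\ref{thm:unicont} once more, now on the connected domain $\Omega_{j_0}$, where $\varepsilon=\varepsilon_{j_0}$, $\mu=\mu_{j_0}$ a.e.\ by \eqref{eq:lip}--\eqref{eq:localreg-eps} and the field vanishes on $Q\cap\Omega_{j_0}$, gives $\mathbf{E}=\mathbf{H}=0$ in $\Omega_{j_0}$, contradicting $j_0\in J$. Therefore $J=\emptyset$.

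The step I expect to be the main obstacle is the geometric one in the second paragraph: turning the relative interior point supplied by Assumption~\ref{Assumption} into a genuine two-sided neighbourhood of $x_0$ straddling $\partial\Omega_{j_0}$ on which unique continuation can be run. This is precisely where the weakness of the $C^0$ hypothesis must be absorbed; working with a thin cylinder rather than a ball, so that both sides of the graph are automatically connected, together with the facts that the graph is Lebesgue-null and that the field already vanishes on the lower side, is what makes the passage across the non-Lipschitz interface go through.
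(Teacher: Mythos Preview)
Your proof is correct and follows the same overall strategy as the paper: argue by contradiction, apply Proposition~\ref{prop:step1} to $\uc{\Omega_J}$, use Assumption~\ref{Assumption} to locate a boundary point, cross the $C^0$ interface with the $W^{1,\infty}$ coefficients $\varepsilon_{j_0},\mu_{j_0}$, and finish with two applications of Theorem~\ref{thm:unicont}.

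The one genuine difference lies in the geometric step. The paper does \emph{not} work directly at $x_0$; instead it picks a point $y_j\in\uc{\Omega_J}$ close to $x_0$, follows the vertical line segment $\{y_j+te_3\}$ until it first hits $\partial\Omega_J$ at a new centre $c=y_j+\tau e_3\in\partial\Omega_{j_0}$, and argues that no other $\Omega_k$ with $k\in J$ meets a small ball $B(c,\tilde\delta)$, obtaining $\Omega_J\cap B(c,\tilde\delta)=\Omega_{j_0}\cap B(c,\tilde\delta)$. You instead stay at $x_0$, replace the ball by a thin cylinder $Q$ whose height dominates the oscillation of $f$, and use a clopen argument: $\uc{\Omega_J}\cap Q$ is nonempty, open, and closed in the connected subgraph region $Q^-$ (closedness coming from $B(x_0,\delta)\cap\partial\uc{\Omega_J}\subset\partial\Omega_{j_0}=\{x_3=f\}$), forcing $Q^-\subset\uc{\Omega_J}$. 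Your route is slightly more direct and avoids the auxiliary centre $c$; the paper's line-segment picture is perhaps more visual. Both yield the same conclusion that on one side of the graph the fields vanish while on the other side the coefficients equal $\varepsilon_{j_0},\mu_{j_0}$, after which the two unique continuation steps are identical.
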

\begin{proof}
Suppose for contradiction that $J$ is nonempty. Then, by Assumption~\ref{Assumption} there exists $x_0\in \partial\uc{\Omega_J}\cap\partial\Omega_{j_0}$
such that $B(x_0,\delta)\cap \partial\uc{\Omega_J} \subset \partial\Omega_{j_0}$ for some $j_0\in J$ and $\delta>0$. To simplify notation, set $j_0=1$.

Let us show that there exist a point $c$ on $\partial\uc{\Omega_J}\cap\partial\Omega_{1}$ and a radius $\tilde \delta>0$ such that
\begin{equation}\label{eq:step2-1}
 \Omega_J \cap B (c,\tilde\delta)=\Omega_{1}\cap B(c,\tilde\delta).
\end{equation}
Figure~\ref{fig:step2} sketches the configuration we have at hand around $c$.
\begin{figure}\label{fig:step2}
\includegraphics[width=0.3\columnwidth]{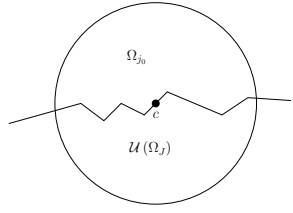}
\caption{A ball centred at an interior point on the boundary of $\uc{\Omega_J}$.}
\end{figure}

Since $\Omega_{1}$ has a $C^{0}$ boundary, for some (smaller) $\delta>0$ there exists a continuous map $f$ and a
suitable orientation of axes such that
$B(x_{0},\delta)\cap\partial\Omega_{J}\subset\partial\Omega_{1}$ and
$$
\Omega_{1}\cap B(x_{0},\delta)
=\left\{ x\in B\left(x_{0},\delta\right): x_{3}>f\left(x_1,x_2\right)\right\}.
$$
This alone does not prove our claim, since $B(x_0,\delta)$ could still intersect $\Omega_J$ when $x_3 \leq f(x_1,x_2)$.
Since $x_{0}\in\partial \uc{\Omega_J}$, there exists a sequence $\left\{y_{j}\right\}\subset\uc{\Omega_J}\cap B(x_{0},\delta)$ such that
$y_{j}$ tends to $x_{0}$.
Consider for a fixed and sufficiently large $j$ the line segment $\{y_{j}+te_{3},t\geq0\}$, and let $\tau>0$ be the least value 
of $t$ such that $y_{j}+t e_{3}\in\partial\Omega_J$.
Then,
$y_{j}+te_{3}\not\in\bar{\Omega}_J,$ for $t<\tau$,
and
$y_{j}+\tau e_{3}\in\partial\Omega_{1}$.
Hence
$y_{j}+\tau e_{3}\not\in\underset{k\in J, k>1}{\cup}\Omega_{k}$.
Since the sets $\Omega_{k}$ are disjoint, the line segment does not intersect
$\underset{k\in J, k>1}{\cup}\Omega_{k}$ in $B(x_{0},\delta)$.
The same argument applies to any line segment $\{z+te_{3},t\geq0\}$ for $z$
sufficiently close to $y_{j}$.
Introducing $c=y_{j}+\tau e_{3}$ we have established that there exists a ball
$B\left(c,\tilde{\delta}\right)$ such that $\Omega_{1}\cap
B\left(c,\tilde{\delta}\right)=\Omega_{J}\cap
B\left(c,\tilde{\delta}\right)$, which is \eqref{eq:step2-1}.

Now, thanks to Proposition~\ref{prop:step1}, and noting that (by Fubini's Theorem) each $\partial\Omega_i$ is of measure zero,
 $\mathbf{E}=\mathbf{H}=0$ almost everywhere in $\uc{\Omega_J}$. 
Thus, for almost every  $x\in B(c,\tilde \delta)$, either $\mathbf{E}=\mathbf{H}=0$ or
$\varepsilon(x) = \varepsilon_1(x)$, and $\mu(x)=\mu_1(x)$. Considering the weak formulation of Maxwell's equations, and arguing as in the proof of
Proposition~\ref{prop:step1}, we note that $\mathbf{E}$ and $\mathbf{H}$ are weak solutions of
\begin{align*}
\nabla\wedge\mathbf{E}-i\,\omega\mu_{0}\mu_{1}(x)\,\mathbf{H} & =0
 \textrm{ in } B(c,\tilde \delta),\\
\nabla\wedge\mathbf{H}+i\,\omega\varepsilon_{0}\varepsilon_{1}(x)\,\mathbf{E} & =0
 \textrm{ in }B(c,\tilde \delta),
\end{align*}
and vanish on the connected non-empty open set $B(c,\tilde \delta)\cap\{x_3<f(x_1,x_2)\}$. 
Since $\epsilon_1$ and $\mu_1$ satisfy \eqref{eq:lip}, that is, 
$$
\| \varepsilon_{1} \|_{W^{1,\infty}(\mathbb{R}^3)^{3\times3}} + \| \mu_{1} \|_{W^{1,\infty}(\mathbb{R}^3)^{3\times3}} \leq M_1,
$$
Theorem~\ref{thm:unicont} shows that $\mathbf{E}=\mathbf{H}=0$ in $B(c,\tilde\delta)$. This in turn shows that $\mathbf{E}$ and $\mathbf{H}$ vanish on a ball inside 
$\Omega_1$, and applying Theorem~\ref{thm:unicont} in $\Omega_1$ we obtain $\mathbf{E}=\mathbf{H}=0$ almost everywhere in $\Omega_1$. This contradiction concludes the proof.
\end{proof}

We now turn to the final step. We have obtained that $\mathbf{E}=\mathbf{H}=0$ almost everywhere in $\Omega$, and therefore either $\mathbf{E}=\mathbf{H}=0$ or $\varepsilon(x)=\mu(x)=\mathbf{I}_3$ 
almost everywhere in $\mathbb{R}^3$. Arguing as above, we deduce that $(\mathbf{E},\mathbf{H})$ is a weak solution of \eqref{eq:maxwell-system} 
with $\varepsilon(x)=\mu(x)=\mathbf{I}_3$ everywhere and the conclusion of Theorem~\ref{thm:no-defect} follows from Rellich's Lemma.

\section{The case of a medium with defects}
We extend our result to the case when defects of small measure
are allowed in the medium. One application is to liquid crystals (see \cite{BASANG-THESIS} for more details). Namely, we assume that the
permittivity and permeability are of the form
\begin{align}
\varepsilon_{D} & = \left(1-\mathbf{1}_{D}\right)\varepsilon
                       +\mathbf{1}_{D}\tilde{\varepsilon},\label{eq:eps-defect}\\
\mu_{D}         & = \left(1-\mathbf{1}_{D}\right)\mu
                       +\mathbf{1}_{D}\tilde{\mu},\nonumber
\end{align}
where $\varepsilon$ and $\mu$  satisfy \eqref{eq:eps-coercive1}-\eqref{eq:localreg-eps}, $\mathbf{1}_{D}$
is the indicator function of a measurable bounded set $D$, such that
\begin{equation}\label{eq:defcon-1}
D\subset{\underset{i\in I}{\cup}}\Omega_{i}, \quad \overline{D\cap\Omega_{i}}\subset\Omega_i \mbox{ and }
\Omega_i \setminus \overline{D\cap\Omega_{i}} \textrm{ is connected for each }i\in I,
\end{equation}
and $\tilde{\epsilon}$ and $\tilde{\mu}$ are real symmetric positive
definite matrices in $L^{\infty}\left(\mathbb{R}^{3}\right)^{3\times3}$ satisfying
\eqref{eq:eps-coercive1}-\eqref{eq:eps-coercive2}.
\begin{thm}\label{thm:defect}
Suppose that the electric and magnetic fields
$\mathbf{E}_{D}\in H_{\textrm{\rm loc}}\left(\textrm{\rm curl};\mathbb{R}^{3}\right)$
and
$\mathbf{H}_{D}\in H_{\textrm{\rm loc}}\left(\textrm{\rm curl};\mathbb{R}^{3}\right)$
are solutions of
\begin{align}
\nabla\wedge\mathbf{E}_{D}-i\,\omega\mu_{0}\mu_{D}(x)\,\mathbf{H}_{D}
 & =  0 \textrm{ in }\mathbb{R}^{3},\label{eq:maxwell-defect}\\
\nabla\wedge\mathbf{H}_{D}+i\omega\varepsilon_{0}\varepsilon_{D}(x)\,\mathbf{E}_{D}
& = 0\textrm{ in }\mathbb{R}^{3},\nonumber
\end{align}
together with the Silver-M\"uller radiation condition \eqref{eq:radiation-condition},
and that $\varepsilon_{D}$ and $\mu_{D}$ are given by \eqref{eq:eps-defect},
with $D$ satisfying \eqref{eq:defcon-1}. Suppose Assumption~\ref{Assumption} holds.
Then, there exists a constant $d_{0}>0$ depending only on the measure $|B_0|$ of $B_0$, $|\omega|$
and the lower and upper bounds $\alpha$ and $\beta$ given in
\eqref{eq:eps-coercive1}-\eqref{eq:eps-coercive2} such that if
the measure of $D$ satisfies $\left|D\right|<d_{0}$, then
$\mathbf{E}_{D}=\mathbf{H}_{D}=0$ almost everywhere.
\end{thm}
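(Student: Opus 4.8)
The plan is to follow the three-step scheme of the proof of Theorem~\ref{thm:no-defect}, replacing the appeals to unique continuation inside the $\Omega_i$ --- unavailable on $D$, where $\varepsilon_D,\mu_D$ are merely $L^\infty$ --- by a quantitative estimate valid precisely because $|D|$ is small. First, Rellich's Lemma~\ref{lem:rellich} applies verbatim (it only uses \eqref{eq:eps-coercive1}--\eqref{eq:eps-local}, which hold for $(\varepsilon_D,\mu_D)$), so $\mathbf{E}_D=\mathbf{H}_D=0$ in $\bar{B}_0^{c}$; in particular $\mathbf{E}_D,\mathbf{H}_D\in L^2(\mathbb{R}^3)^3$ have compact support.

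The new ingredient is the following lemma, which I would prove first. There is $d_0=d_0(\alpha,\beta,|\omega|)>0$ such that if $(\mathbf{u},\mathbf{v})\in H(\mathrm{curl};\mathbb{R}^3)^2$ is supported in a measurable set $E$ with $|E|<d_0$ and solves $\nabla\wedge\mathbf{u}=i\omega\mu_0 P\mathbf{v}$, $\nabla\wedge\mathbf{v}=-i\omega\varepsilon_0 Q\mathbf{u}$ with $P,Q$ symmetric and $\alpha\mathbf{I}_3\le P,Q\le\beta\mathbf{I}_3$, then $\mathbf{u}=\mathbf{v}=0$. Taking divergences, $\nabla\cdot(Q\mathbf{u})=\nabla\cdot(P\mathbf{v})=0$ in $\mathbb{R}^3$. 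Let $\mathbf{u}=\nabla\phi+\mathbf{w}$ be the $L^2(\mathbb{R}^3)$-orthogonal Helmholtz decomposition, $\nabla\cdot\mathbf{w}=0$. Since $\nabla\wedge\mathbf{w}=\nabla\wedge\mathbf{u}\in L^2$, a Fourier-side computation gives $\mathbf{w}\in\dot{H}^1(\mathbb{R}^3)$ with $\|\nabla\mathbf{w}\|_{L^2}=\|\nabla\wedge\mathbf{u}\|_{L^2}$, so by the Sobolev embedding $\dot{H}^1(\mathbb{R}^3)\hookrightarrow L^6(\mathbb{R}^3)$ one gets $\|\mathbf{w}\|_{L^2(E)}\le|E|^{1/3}\|\mathbf{w}\|_{L^6}\le C\,|E|^{1/3}\|\nabla\wedge\mathbf{u}\|_{L^2}$. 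As $(Q-\mathbf{I}_3)\mathbf{u}$ is supported in $E$, the constraint $\nabla\cdot(Q\mathbf{u})=0$ reads $\nabla\phi=-\mathbb{P}_{\mathrm{grad}}\bigl[\mathbf{1}_E(Q-\mathbf{I}_3)(\nabla\phi+\mathbf{w})\bigr]$, $\mathbb{P}_{\mathrm{grad}}$ denoting the $L^2$-orthogonal projection onto gradient fields; the operator $\mathrm{Id}+\mathbb{P}_{\mathrm{grad}}\mathbf{1}_E(Q-\mathbf{I}_3)$ is coercive on gradients, its quadratic form at $\nabla\psi$ being $\langle(\mathbf{I}_3+\mathbf{1}_E(Q-\mathbf{I}_3))\nabla\psi,\nabla\psi\rangle\ge\alpha\|\nabla\psi\|_{L^2}^2$, whence $\|\nabla\phi\|_{L^2}\le\alpha^{-1}\|\mathbf{1}_E(Q-\mathbf{I}_3)\mathbf{w}\|_{L^2}\le C(\alpha,\beta)\|\mathbf{w}\|_{L^2(E)}$. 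Combining, $\|\mathbf{u}\|_{L^2}=\|\mathbf{u}\|_{L^2(E)}\le C(\alpha,\beta)\,|E|^{1/3}\|\nabla\wedge\mathbf{u}\|_{L^2}\le C(\alpha,\beta)\beta\mu_0|\omega|\,|E|^{1/3}\|\mathbf{v}\|_{L^2}$, and symmetrically $\|\mathbf{v}\|_{L^2}\le C(\alpha,\beta)\beta\varepsilon_0|\omega|\,|E|^{1/3}\|\mathbf{u}\|_{L^2}$; taking $d_0$ so small that the product of these two constants times $d_0^{2/3}$ is less than $1$ forces $\mathbf{u}=\mathbf{v}=0$.

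With the lemma in hand I would show $\mathbf{E}_D=\mathbf{H}_D=0$ a.e.\ in $\mathbb{R}^3\setminus\bar D$, by repeating the proofs of Propositions~\ref{prop:step1} and~\ref{pro:step-2} and using that off $\bar D$ the pair $(\varepsilon_D,\mu_D)$ equals the piecewise $W^{1,\infty}$ pair $(\varepsilon,\mu)$. Put $J=\{i\in I:\ |\mathbf{E}_D|^2+|\mathbf{H}_D|^2>0$ on a set of positive measure in $\Omega_i\}$ and suppose $J\neq\emptyset$. The proof of Proposition~\ref{prop:step1} (which uses only Rellich and unique continuation for Helmholtz) gives $\mathbf{E}_D=\mathbf{H}_D=0$ a.e.\ in $\uc{\Omega_J}$, since a.e.\ $x\in\uc{\Omega_J}$ either satisfies $\varepsilon_D(x)=\mu_D(x)=\mathbf{I}_3$ (as happens whenever $x\notin\Omega$) or lies in some $\Omega_k$ with $k\notin J$, where $\mathbf{E}_D=\mathbf{H}_D=0$ a.e. By Assumption~\ref{Assumption} there are $j_0\in J$, relabelled $1$, and an interior point $x_0$ of $\partial\uc{\Omega_J}$ lying on $\partial\Omega_1$; the line-segment construction of Proposition~\ref{pro:step-2} yields a ball $B(c,\tilde\delta)$ with $\Omega_1\cap B(c,\tilde\delta)=\Omega_J\cap B(c,\tilde\delta)$, and since $\overline{D\cap\Omega_1}\Subset\Omega_1$ and $c\in\partial\Omega_1$ we may shrink $\tilde\delta$ so that $B(c,\tilde\delta)$ misses $D\cap\Omega_1$. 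Exactly as in Proposition~\ref{pro:step-2}, $(\mathbf{E}_D,\mathbf{H}_D)$ then weakly solves Maxwell's equations with the $W^{1,\infty}$ coefficients $(\varepsilon_1,\mu_1)$ in $B(c,\tilde\delta)$ and vanishes on the non-empty connected open set $B(c,\tilde\delta)\cap\{x_3<f(x_1,x_2)\}$, so Theorem~\ref{thm:unicont} gives $\mathbf{E}_D=\mathbf{H}_D=0$ in $B(c,\tilde\delta)$, hence on a ball inside $\Omega_1\setminus\overline{D\cap\Omega_1}$; as by \eqref{eq:defcon-1} this set is open, connected and carries the $W^{1,\infty}$ coefficients $(\varepsilon_1,\mu_1)$, Theorem~\ref{thm:unicont} there gives $\mathbf{E}_D=\mathbf{H}_D=0$ a.e.\ in $\Omega_1\setminus\overline{D\cap\Omega_1}$. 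Now the extension by zero of $(\mathbf{E}_D,\mathbf{H}_D)|_{\Omega_1}$ lies in $H(\mathrm{curl};\mathbb{R}^3)$ (it vanishes near $\partial\Omega_1$, as $\overline{D\cap\Omega_1}\Subset\Omega_1$), is supported in $\overline{D\cap\Omega_1}$, of measure $\le|D|<d_0$, and solves a system of the type in the lemma; hence it vanishes, i.e.\ $\mathbf{E}_D=\mathbf{H}_D=0$ a.e.\ in $\Omega_1$, contradicting $1\in J$. Therefore $J=\emptyset$, so $\mathbf{E}_D=\mathbf{H}_D=0$ a.e.\ in $\bigcup_i\Omega_i$, in particular a.e.\ in $\Omega$; thus a.e.\ in $\mathbb{R}^3$ either $\mathbf{E}_D=\mathbf{H}_D=0$ or $\varepsilon_D=\mu_D=\mathbf{I}_3$, and $\mathbf{E}_D=\mathbf{H}_D=0$ everywhere follows from Rellich's Lemma as in the last step of the proof of Theorem~\ref{thm:no-defect}.

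The crux is the lemma: the constant must depend only on $\alpha,\beta,|\omega|$, not on the (possibly very irregular) set $E$ nor on the geometry of the $\Omega_i$. The naive approach --- inverting the free Maxwell system with a source on $E$ through its dyadic Green's function --- fails, because that operator has a Calder\'on--Zygmund part which is not smoothing and whose norm is not lowered by sandwiching with $\mathbf{1}_E$. The Helmholtz splitting circumvents this: the solenoidal part $\mathbf{w}$ lies in $\dot{H}^1\hookrightarrow L^6$, so restriction to $E$ genuinely costs a factor $|E|^{1/3}$, while the gradient part is governed not by a singular integral but by the coercive divergence-form equation above, whose inverse has norm $\le\alpha^{-1}$. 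The remaining points above --- legitimacy of the zero-extension, which relies on $\overline{D\cap\Omega_i}\Subset\Omega_i$, and the fact that the geometric steps of Propositions~\ref{prop:step1}--\ref{pro:step-2} are untouched by $D$ --- are routine once one notes that $D$ never meets the subdomain boundaries $\partial\Omega_i$.
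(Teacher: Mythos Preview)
Your proposal is correct and follows essentially the same route as the paper: reduce to fields supported in the small set $D$ by the unique-continuation machinery of Theorem~\ref{thm:no-defect} (this is the paper's Proposition~\ref{pro:def-1}, which you have effectively reproved inline), then kill such fields by a Helmholtz decomposition combined with the Sobolev embedding into $L^6$ and H\"older's inequality to extract the factor $|D|^{1/3}$.

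The differences are organisational rather than substantive. The paper first establishes ${\rm supp}\,\mathbf{E}_D\cup{\rm supp}\,\mathbf{H}_D\subset\bar D$ and then runs the quantitative estimate once, on $B_0$, decomposing $\mathbf{E}_D=\mathbf{A}_H+\nabla p$ with $\mathbf{A}_H$ a vector potential for $i\omega\mu_0\hat\mu\mathbf{H}_D$; you instead package the estimate as a stand-alone lemma on $\mathbb{R}^3$ (via Fourier and $\dot H^1\hookrightarrow L^6$), decompose $\mathbf{u}$ directly, and invoke the lemma per subdomain inside the contradiction argument for $J\neq\emptyset$. Working on $\mathbb{R}^3$ rather than $B_0$ has the minor bonus that your $d_0$ depends only on $\alpha,\beta,|\omega|$ and not on $|B_0|$, slightly sharpening the stated theorem. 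Your handling of the gradient part through the coercive operator $\mathrm{Id}+\mathbb{P}_{\mathrm{grad}}\mathbf{1}_E(Q-\mathbf{I}_3)$ is exactly the paper's Neumann problem for $p$ rewritten in projection language; both yield $\|\nabla\phi\|_{L^2}\lesssim\alpha^{-1}(\beta+1)\|\mathbf{w}\|_{L^2(E)}$.
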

To prove Theorem~\ref{thm:defect}, we use the following variant of Theorem~\ref{thm:no-defect}.
\begin{prop}\label{pro:def-1}
Under the same assumptions as Theorem\,\ref{thm:no-defect}, and assuming that \eqref{eq:defcon-1}
holds,
$$
{\rm supp}\,\mathbf{H}_{D}\cup {\rm supp}\,\mathbf{E}_{D}\subset\bar{D}.
$$
\end{prop}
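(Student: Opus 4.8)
The plan is to rerun the three-step proof of Theorem~\ref{thm:no-defect}, but with $\mathbb{R}^3$ replaced by the open set $G:=\mathbb{R}^3\setminus\overline{D}$ and each $\Omega_i$ replaced by $\widehat\Omega_i:=\Omega_i\setminus\overline{D\cap\Omega_i}$; the hypothesis \eqref{eq:defcon-1} is exactly what makes this substitution harmless. Since $\overline{D\cap\Omega_i}\subset\Omega_i$ and the $\Omega_i$ are disjoint, a short argument gives $\overline{D}\cap\Omega_i=\overline{D\cap\Omega_i}$, hence $\overline{D}\subset\bigcup_i\Omega_i\subset\Omega$ and $\overline{D}\cap\partial\Omega_i=\emptyset$ for every $i$. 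In particular every interface $\partial\Omega_i$ and all of $\Omega^c$ lie in $G$; on $G$ one has $\varepsilon_D=\varepsilon$, $\mu_D=\mu$ almost everywhere, so that $(\mathbf{E}_D,\mathbf{H}_D)$ is a weak solution of \eqref{eq:maxwell-system} in $G$ with the piecewise $W^{1,\infty}$ coefficients $\varepsilon,\mu$; and each $\widehat\Omega_i$ is open and connected and coincides with $\Omega_i$ near $\partial\Omega_i$, so it inherits the $C^0$ boundary regularity there. Since $\mathbf{1}_D=0$ almost everywhere on $G$, the assertion ${\rm supp}\,\mathbf{H}_D\cup{\rm supp}\,\mathbf{E}_D\subset\overline{D}$ is equivalent to $\mathbf{E}_D=\mathbf{H}_D=0$ almost everywhere in $G$, and this is what I would prove.

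The technical backbone is a defect version of Proposition~\ref{prop:step1}: if $A\subset\mathbb{R}^3$ is bounded open and for almost every $x\in\uc{A}$ one has $\varepsilon_D(x)=\mu_D(x)=\mathbf{I}_3$, or $\mathbf{E}_D(x)=\mathbf{H}_D(x)=0$, or $x\in\overline{D}$, then $\mathbf{E}_D=\mathbf{H}_D=0$ almost everywhere in $\uc{A}\setminus\overline{D}$. Indeed, on $\uc{A}\setminus\overline{D}$ this trichotomy collapses to the dichotomy already used in Proposition~\ref{prop:step1}, so by the same computation $(\mathbf{E}_D,\mathbf{H}_D)$ solves weakly the constant-coefficient Maxwell system in $\uc{A}\setminus\overline{D}$; on its unique unbounded component the fields vanish near infinity by Rellich's Lemma~\ref{lem:rellich} and hence vanish by Theorem~\ref{thm:unicont}, while every bounded component meets a subregion on which the fields are already known to vanish (this is where both clauses of \eqref{eq:defcon-1} enter: $\overline{D}$ does not reach any $\partial\Omega_i$ and $\Omega_i\setminus\overline{D\cap\Omega_i}$ is connected, which together forbid a bounded component from being ``sealed off'' by $D$ between the interfaces and the surrounding $\mathbf{I}_3$-region), whence the fields vanish there too. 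Applied with $A=\Omega$ — where $\uc{\Omega}\setminus\overline{D}=\uc{\Omega}$ — this yields the first step: $\mathbf{E}_D=\mathbf{H}_D=0$ almost everywhere in $\uc{\Omega}$.

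For the second step set $J:=\{i\in I:\ |\mathbf{E}_D|^2+|\mathbf{H}_D|^2>0$ on a set of positive measure in $\widehat\Omega_i\}$ and suppose, for contradiction, $J\neq\emptyset$. Assumption~\ref{Assumption} supplies $j_0\in J$, say $j_0=1$, and $x_0\in\partial\uc{\Omega_J}\cap\partial\Omega_1$, $\delta>0$ with $B(x_0,\delta)\cap\partial\uc{\Omega_J}\subset\partial\Omega_1$. The geometric construction in the proof of Proposition~\ref{pro:step-2}, which uses only the $C^0$ regularity of $\partial\Omega_1$ near $x_0$, then produces $c\in\partial\Omega_1\cap\partial\uc{\Omega_J}$ and $\widetilde\delta>0$ — shrunk so that in addition $B(c,\widetilde\delta)\cap\overline{D}=\emptyset$ — with $\Omega_J\cap B(c,\widetilde\delta)=\Omega_1\cap B(c,\widetilde\delta)$ and with $B(c,\widetilde\delta)\cap\{x_3<f(x_1,x_2)\}$ a nonempty open connected subset of $\uc{\Omega_J}$. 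Since any $\Omega_i$ meeting $\uc{\Omega_J}$ has $i\notin J$, the hypothesis of the defect version of Proposition~\ref{prop:step1} holds for $A=\Omega_J$ (off $\overline{D}$, a point of $\uc{\Omega_J}$ lies in $\Omega^c$, where $\varepsilon_D=\mu_D=\mathbf{I}_3$, or in some $\widehat\Omega_i$ with $i\notin J$, where $\mathbf{E}_D=\mathbf{H}_D=0$ a.e., or in a null set), so $\mathbf{E}_D=\mathbf{H}_D=0$ almost everywhere in $\uc{\Omega_J}\setminus\overline{D}$, in particular in $B(c,\widetilde\delta)\cap\{x_3<f\}$. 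From here the end of the proof of Proposition~\ref{pro:step-2} carries over with $\varepsilon_1,\mu_1$ in place of the coefficients — legitimate because on $B(c,\widetilde\delta)$, which avoids $\overline{D}$, almost everywhere either $\mathbf{E}_D=\mathbf{H}_D=0$ or $\varepsilon_D=\varepsilon_1$, $\mu_D=\mu_1$: two applications of Theorem~\ref{thm:unicont}, first on $B(c,\widetilde\delta)$ and then on the connected set $\widehat\Omega_1$, give $\mathbf{E}_D=\mathbf{H}_D=0$ almost everywhere in $\widehat\Omega_1$, contradicting $1\in J$. Hence $J=\emptyset$.

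Finally, Steps 1 and 2 give $\mathbf{E}_D=\mathbf{H}_D=0$ almost everywhere on $\uc{\Omega}\cup\bigcup_i\widehat\Omega_i$, and what is left of $G$ is, up to a null set, contained in the bounded components of $\overline{\Omega}^c$, where $\varepsilon_D=\mu_D=\mathbf{I}_3$; so the dichotomy ``$\varepsilon_D=\mu_D=\mathbf{I}_3$ or $\mathbf{E}_D=\mathbf{H}_D=0$'' holds almost everywhere in $G$, $(\mathbf{E}_D,\mathbf{H}_D)$ solves the constant-coefficient Maxwell system weakly in $G$, and one concludes $\mathbf{E}_D=\mathbf{H}_D=0$ a.e. in $G$ exactly as in the defect version of Proposition~\ref{prop:step1} (Rellich's Lemma and Theorem~\ref{thm:unicont} on the unbounded component of $G$, the bounded components handled via \eqref{eq:defcon-1}), which is the claim. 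I expect the main obstacle to be precisely this connectivity bookkeeping. In Theorem~\ref{thm:no-defect} every unique-continuation step takes place on a connected set ($\uc{\Omega}$, $\uc{\Omega_J}$, $\mathbb{R}^3$), whereas here $\overline{D}$ can split $\uc{\Omega_J}$ and $G$ into several components, and one must check that no bounded component is isolated from the set where vanishing is already known — which is exactly what the requirements $\overline{D\cap\Omega_i}\subset\Omega_i$ and $\Omega_i\setminus\overline{D\cap\Omega_i}$ connected in \eqref{eq:defcon-1} guarantee. The analysis near the $C^0$ interfaces, by contrast, is identical to the defect-free case.
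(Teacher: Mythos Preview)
Your approach is essentially the paper's: rerun the three-step proof of Theorem~\ref{thm:no-defect}, using that by \eqref{eq:defcon-1} the defects stay away from the interfaces $\partial\Omega_i$ so that the geometric and unique-continuation arguments near those interfaces are unchanged, while the connectivity clause in \eqref{eq:defcon-1} lets unique continuation propagate through each $\widehat\Omega_i$. The paper's own proof is a single sentence to this effect; you have supplied considerably more detail and, in particular, correctly identified the connectivity bookkeeping (that removing $\overline{D}$ may disconnect $\uc{\Omega_J}$ and $G$) as the one place requiring genuine care beyond the defect-free argument.
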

\begin{proof}
The proof follows from that of Theorem\,\ref{thm:no-defect}, since by assumption for each $i\in I$,
$\overline{D\cap\Omega_{i}}\subset\Omega_{i}$,  and the boundary of
$\Omega\setminus\Omega_{i}$ is unaltered by the defects.
\end{proof}
\begin{proof}[Proof of Theorem~\ref{thm:defect}]
Since \eqref{eq:maxwell-defect} admits a weak formulation,
arguing as before we see using Proposition\,\ref{pro:def-1} that
$\mathbf{E}_{D}\in H\left(\textrm{curl};B_{0}\right)$ and
$\mathbf{H}_{D}\in H\left(\textrm{curl};B_{0}\right)$ have
compact support in $B_{0}$ and are also solutions of
\begin{align*}
\nabla\wedge\mathbf{E}_{D}-i\omega\mu_{0}\hat{\mu}\,\mathbf{H}_{D} & =0\textrm{ in }B_{0},\\
\nabla\wedge\mathbf{H}_{D}+i\omega\varepsilon_{0}\hat{\epsilon}\,\mathbf{E}_{D}
& =0\textrm{ in }B_{0},
\end{align*}
where
$\hat{\varepsilon}=\mathbf{I}_3+\mathbf{1}_{D}\left(\tilde{\varepsilon}-\mathbf{I}_3\right),$
and
$\hat{\mu}=\mathbf{I}_3+\mathbf{1}_{D}\left(\tilde{\mu}-\mathbf{I}_3\right)$.
Note that $i\omega\mu_{0}\hat{\mu}\,\mathbf{H}_{D}$ has compact
support and is divergence free. Thus the Helmholtz decomposition (see e.g. \cite{FRIEDRICHS-55,GIRAULT-RAVIART-79,MONK-03}) 
of $i\omega\mu_{0}\hat{\mu}\,\mathbf{H}_{D}$
shows there exists a unique $\mathbf{A}_{H}\in H^{1}\left(B_{0}\right)^{3}$ such that
$\mathbf{A}_{H}\cdot\nu=0,$ on $\partial B_{0}$,
$\textrm{div}\left(\mathbf{A}_{H}\right)=0$ and such that
$i\omega\mu_{0}\hat{\mu}\,\mathbf{H}_{D}=\nabla\wedge\mathbf{A}_{H}$. Furthermore, $\mathbf{A}_{H}$ satisfies
\begin{eqnarray*}
\left\Vert \nabla \mathbf{A}_{H}\right\Vert_{L^{2}\left(B_{0}\right)^{3\times 3}}
&\leq&
C \left( \left\Vert \nabla \wedge \mathbf{A}_{H}\right\Vert_{L^{2}\left(B_{0}\right)^{3}} 
+ \left\Vert \mathbf{A}_{H}\right\Vert_{L^{2}\left(B_{0}\right)^{3}} \right) \\
\mbox{ and }\left\Vert \mathbf{A}_{H}\right\Vert_{L^{2}\left(B_{0}\right)^{3}} 
&\leq&  C |B_0|^{1/3}
\left\Vert \nabla \wedge \mathbf{A}_{H}\right\Vert_{L^{2}\left(B_{0}\right)^{3}} ,
\end{eqnarray*}
where $C$ is a universal constant. Altogether this yields
\begin{equation}
\left\Vert \nabla \mathbf{A}_{H}\right\Vert_{L^{2}\left(B_{0}\right)^{3\times 3}}
\leq C \beta \mu_0 |\omega| \left(\left|B_0\right|+1\right)^{1/3} \left\Vert
\mathbf{H}_{D}\right\Vert _{L^{2}\left(B_{0}\right)^3}. \label{eq:friedrichs}
\end{equation}
Since $\mathbf{E}_{D}-\mathbf{A}_{H}$ is curl free,  we deduce that there exists $p\in H^{1}\left(B_{0}\right)$
such that $\mathbf{E}_{D}=\mathbf{A}_{H}+\nabla p$,  and $p$ is uniquely defined by setting $\int_{B_{0}}p\,dx=0$.
Noticing that $\hat{\varepsilon}\mathbf{E}_{D}$ is divergence free, and $\hat{\varepsilon}-\mathbf{I}_3$ is compactly supported in $B_0$ 
we have that $p$ is the solution of
\begin{align*}
\textrm{div}\left(\hat{\varepsilon}\nabla p\right) & =-\textrm{div}\left(\hat{\varepsilon}\mathbf{A}_{H}\right)\textrm{ in }B_{0},\\
\nabla p\cdot n & =0\textrm{ on }\partial B_{0},\\
\int_{B_{0}}p\,dx & =0.
\end{align*}
Since $\mathbf{A}_H$ is divergence free, the right-hand side becomes
$$
-\textrm{div}\left(\hat{\varepsilon}\mathbf{A}_{H}\right)
=
-\textrm{div}\left(\mathbf{1}_{D}\left(\tilde{\varepsilon}-\mathbf{I}_3\right)\mathbf{A}_{H}\right).
$$
To proceed, we compute using the Cauchy-Schwarz inequality the following bound
\begin{align*}
\alpha\left\Vert \nabla p\right\Vert ^{2} _{L^{2}(B_0)^3} \leq\int_{B_{0}}\hat{\epsilon}\nabla p\cdot\nabla p\,dx
&= -\int_{B_{0}}\mathbf{1}_{D}\left(\tilde{\varepsilon}-\mathbf{I}_3\right)\mathbf{A}_{H}\cdot\nabla p\, dx\\
& \leq \left(\beta+1\right)\left\Vert \mathbf{A}_{H}\right\Vert _{L^{2}(D)^3}
\left\Vert \nabla p\right\Vert _{L^{2}(B_0)^3},
\end{align*}
and we have obtained that
$$
\left\Vert \nabla p\right\Vert _{L^{2}(B_0)^3} \leq \frac{\beta+1}{\alpha}
\left\Vert \mathbf{A}_{H}\right\Vert _{L^2(D)^3}.
$$
Next note using Proposition\,\ref{pro:def-1} that
\[
\left\Vert
\mathbf{E}_{D}\right\Vert _{L^{2}\left(B_{0}\right)^3}
=\left\Vert
\mathbf{E}_{D}\right\Vert _{L^{2}\left(D\right)^3}
\leq
\left\Vert
\nabla p\right\Vert _{L^{2}\left(B_{0}\right)^3}
+\left\Vert
\mathbf{A}_{H}\right\Vert
_{L^{2}\left(D\right)^3}
\leq\frac{2\beta+1}{\alpha}\left\Vert
\mathbf{A}_{H}\right\Vert _{L^{2}\left(D\right)^3}.
\]
The Sobolev-Gagliardo-Nirenberg inequality in $B_{0}$ shows that
$$
\left\Vert \mathbf{A}_{H}\right\Vert _{L^{6}\left(B_{0}\right)^3}\leq
C \left(\left|B_0\right|+1\right)^{1/3}\left\Vert \mathbf{A}_{H}\right\Vert _{H^{1}\left(B_{0}\right)^3},
$$
where $C$ is a universal constant. Therefore, using
H\"older's inequality, together with the Poincar\'e-Friedrichs
estimate \eqref{eq:friedrichs}, we have
\[
\left\Vert \mathbf{A}_{H} \right\Vert_{L^{2}\left(D\right)^3}
\leq
\left|D\right|^{\frac{1}{3}}\left\Vert
\mathbf{A}_{H}\right\Vert _{L^{6}\left(B_{0}\right)^3}\leq
C\beta \left(\left|B_0\right|+1\right)^{2/3} \mu_0 |\omega| \left|D\right|^{\frac{1}{3}}\left\Vert
\mathbf{H}_{D}\right\Vert _{L^{2}\left(B_{0}\right)^3}.
\]
Altogether we have obtained
\begin{equation}
\left\Vert \mathbf{E}_{D}\right\Vert
_{L^{2}\left(B_{0}\right)^3}\leq
C\frac{\beta(\beta+1)}{\alpha}\left(\left|B_0\right|+1\right)^{2/3}  \mu_0 |\omega|\left|D\right|^{\frac{1}{3}}\left\Vert
\mathbf{H}_{D}\right\Vert
_{L^{2}\left(B_{0}\right)^3}.\label{eq:bd-1}
\end{equation}
Repeating the same argument, but starting with $\mathbf{H}_{D}$, we obtain also
\begin{equation}
\left\Vert \mathbf{H}_{D}\right\Vert _{L^{2}\left(B_{0}\right)^3}\leq
C\frac{\beta(\beta+1)}{\alpha}\left(\left|B_0\right|+1\right)^{2/3} \varepsilon_0 |\omega|
\left|D\right|^{\frac{1}{3}}\left\Vert\mathbf{E}_{D}\right\Vert
_{L^{2}\left(B_{0}\right)^3}.\label{eq:bd-2}\end{equation}
The inequalities \eqref{eq:bd-1} and \eqref{eq:bd-2} imply that $\mathbf{H}_{D}=\mathbf{E}_{D}=0$
when
\begin{equation}\label{eq:d0}
\left|D\right|<d_0 := C \frac{\alpha^3}{\beta^3(\beta+1)^3 (\left|B_0\right|+1)^2 \left(\sqrt{\varepsilon_0\mu_0}|\omega|\right)^3}\,,
\end{equation}
where $C$ is a universal constant.
\end{proof}
\begin{rem}
The dependence of the threshold constant $d_0$ given by \eqref{eq:d0} on $|\omega|$ and $\left|B_0\right|$ shows that for a permeability
$\mu$ and a permittivity $\varepsilon$ satisfying \eqref{eq:eps-coercive1}, \eqref{eq:eps-coercive2} and
\eqref{eq:eps-local} only, uniqueness for Maxwell's equations holds provided, if $\omega$ is fixed, the domain $\Omega$ is of small
measure and bounded diameter, or, for a given $\Omega$, when the absolute value of the frequency $|\omega|$ is sufficiently small. 
In such cases, the whole domain 
$\Omega$ can be taken as a defect $D$ (and a fictitious ball containing $D$ plays the role of $\Omega$). We do not claim that 
the dependence of $d_0$ in terms of  $|\omega|$ or $|B_0|$ in \eqref{eq:d0} is optimal. In contrast, Theorem~\ref{thm:no-defect} requires 
additional regularity assumptions  on $\mu$ and $\varepsilon$, but does not depend on the frequency or the size of the domain.
\end{rem}

\section*{Acknowledgements}
The authors were supported by EPSRC Grant EP/E010288/1 and by the  EPSRC  Science and Innovation award to the Oxford Centre for Nonlinear PDE (EP/E035027/1).

\end{document}